\numberwithin{equation}{section}
\theoremstyle{definition}
\newtheorem{definition}{Definition}[section]
\theoremstyle{remark}
\newtheorem{remark}[definition]{Remark}
\theoremstyle{plain}
\newtheorem{theorem}[definition]{Theorem}
\newtheorem{lemma}[definition]{Lemma}
\newtheorem{proposition}[definition]{Proposition}
\newtheorem{result}[definition]{Result}
\newtheorem{question}[definition]{Question}
\newcommand{\eps}{\varepsilon}
\newcommand{\al}{\alpha}
\newcommand{\tht}{\theta}
\newcommand{\rank}{\operatorname{rank}}
\newcommand{\OM}{\Omega}
\newcommand{\smoo}{\mathcal{C}}
\newcommand{\hol}{\mathcal{O}}
\newcommand{\poly}{\mathscr{P}}
\newcommand{\rl}{{\sf Re}}
\newcommand{\imag}{{\sf Im}}
\newcommand{\impl}{\Longrightarrow}
\newcommand\hull[1]{\widehat{#1}}
\newcommand{\delbydel}[2]{\frac{\partial #1}{\partial #2}}
\newcommand{\cplx}{\mathbb{C}}
\begin{document}
	\title[Polynomial approximation on certain sets]{Polynomial Convexity and Polynomial approximations of certain sets in $\mathbb{C}^{2n}$ with non-isolated CR-singularities}
	\author{Golam Mostafa Mondal}
	\address{Department of Mathematics and Statistics, Indian Institute of Science Education and Research Kolkata,
		Mohanpur -- 741 246}
	\email{golammostafaa@gmail.com}
	\keywords{Polynomial convexity, polynomial approximation, totally real, CR-singularity}
	\subjclass[2010]{Primary: 32E20}

	\begin{abstract}
		In this paper, we first consider the graph of $(F_1,F_{2},\cdots,F_{n})$ on $\overline{\mathbb{D}}^{n},$ where $F_{j}(z)=\bar{z}^{m_{j}}_{j}+R_{j}(z),j=1,2,\cdots,n,$ which has non-isolated CR-singularities if $m_{j}>1$ for some $j\in\{1,2,\cdots,n\}.$
		We show that under certain condition on $R_{j},$ the graph is polynomially convex and holomorphic polynomials on the graph approximates all continuous functions. We also show that there exists an open polydisc $D$ centred at the origin such that the set $\{(z^{m_{1}}_{1},\cdots, z^{m_{n}}_{n}, \bar{z_1}^{m_{n+1}} + R_{1}(z),\cdots, \bar{z_{n}}^{m_{2n}} + R_{n}(z)):z\in \overline{D},m_{j}\in \mathbb{N}, j=1,\cdots,2n\}$ is polynomially convex; and if $\gcd(m_{j},m_{k})=1~~\forall j\not=k,$ the algebra generated by the functions $z^{m_{1}}_{1},\cdots, z^{m_{n}}_{n}, \bar{z_1}^{m_{n+1}} + R_{1},\cdots, \bar{z_{n}}^{m_{2n}} + R_{n}$ is dense in $\smoo(\overline{D}).$ We prove an analogue of Minsker's theorem over the closed unit polydisc, i.e, if $\gcd(m_{j},m_{k})=1~~\forall j\not=k,$ the algebra  $[z^{m_{1}}_{1},\cdots, z^{m_{n}}_{n}, \bar{z_1}^{m_{n+1}},\cdots , \bar{z_{n}}^{m_{2n}};{\overline{\mathbb{D}}^{n}} ]=\smoo(\overline{\mathbb{D}}^{n}).$ In the process of proving the above results, we also studied the polynomial convexity and approximation of certain graphs.
	\end{abstract}

	\maketitle
	\section{Introduction and statements of the results}\label{S:intro}
	Let $D$ be an open polydisc in $\cplx^n$ with center at the origin, and by $\smoo(\overline{D}),$ we denote the set of all continuous complex valued functions on $\overline{D}.$ For $f_1,f_2,\cdots,f_N\in \smoo(\overline{D}),$ we denote by $[f_1,f_2,\cdots,f_N;\overline{D}]$ the uniform algebra generated by $f_1, f_2, \cdots, f_N$ on $\overline{D}.$ In this article, we report our investigation to the following question:
	\begin{question}\label{Q:Unfrm alg}
		Let $\nu_{1},\cdots ,\nu_{n}$ be positive integers. Under what conditions on $ f_1,\cdots,f_n,$ can one conclude
		\begin{align*}
			[z^{\nu_{1}}_{1},\cdots, z^{\nu_{n}}_{n}, f_1,f_2,\cdots , f_n; \overline{D}] = \smoo(\overline{D})?
		\end{align*}
	\end{question}
	In this discussion we first focus on the case  when $\nu_j=1~~\forall j\in \{1,2,\cdots,n\}.$ This problem seems relatively easier because the underlined set is the graph of $(f_{1},\cdots,f_{n})$ over the closed polydisc $\overline{D}.$ We present a brief literature survey on this. The problem is quite well studied for $n=1.$ The following result by Mergelyan \cite{MAR54} played a vital role in creating some interest in this question.
	
	\begin{result}[Mergelyan]\label{R:Mergyln}
		Let $D$ be an open disc in $\cplx$ and let $f$ be a continuous real-valued
		function on $\overline{D}$. If for each $a$ in $\overline{D},$ $f^{-1}\left(f(a)\right)$ has no interior and does not separate $\cplx$, then $[z,f; \overline{D}] = \smoo(\overline{D}),$ where $[z,f; \overline{D}]$ denotes
		the algebra generated by the functions $z$ and $f$ with complex coefficients.
	\end{result}
	\noindent We now state a couple of result due to Wermer \cite{W1}.
	
	\begin{result}[Wermer]\label{R:W164}
		Let $D$ be an open unit disc in $\cplx$ with center at the origin. If $f(z)=\bar{z}+R(z),$ where 
		\begin{align}
			\left|R(z)-R(a)\right|<|z-a|
		\end{align}
		for all $a,z$ in $\overline{D}$ with $a\ne z,$ then $[z,f;\overline{D}]=\smoo(\overline{D}).$
	\end{result}
	\begin{result}[Wermer]\label{R:W264}
		Fix $\delta_{0}>0.$ Let $g$ be function defined in the disc $\{z\in \cplx:|z|<\delta_{0}\}$ and have continuous partial derivatives up to the second order there. Assume 
		\begin{align}\label{Eq:Totrl}
			\delbydel{g}{\bar{z}}(0)\ne 0.
		\end{align}
		Then there exist $\delta,$ $0<\delta<\delta_{0},$ such that for $D=\{z:|z|\leq\delta\},$ $[z,g;D]=C(D).$	
	\end{result}	
	
	\noindent There are several results due to Preskenis \cite{PRE1,PRE2,PRE3}, O'Farrell, Preskenis and Walsh \cite{OP1}, (see Sanabria \cite{sangar} for a nice survey) that generalize Wermer's results.
	\par Before going further, we discuss the relation between the above mentioned approximation problem with polynomial convexity, which is a fundamental notion in several complex variables. Let $K$ be a compact subset of $\cplx^n.$ The \textit{polynomial convex hull} of $K$ is denoted by $\hull K$ and defined by $\hull K:=\left\{\al\in\cplx^n: |p(\al)|\le\max_{K}|p|~~\forall p\in \cplx[z_1,z_2,\cdots,z_n]\right\}.$ Clearly, $K\subset\hull K.$ We say $K$ is \textit{polynomially convex} if $\hull K= K.$ In $\cplx,$ $\hat{K}=K$ if and only if $\cplx\setminus K$ is connected. Any convex compact subset of $\cplx^{n}$ is polynomially convex. In general, it is difficult to deduce if a given compact subset in $\cplx^{n}$ is polynomially convex. A closed subset $E$ is \textit{locally polynomially convex} at $p\in E$ if there exists $r>0$ such that $E\cap \overline{B(p,r)}$ is polynomially convex. Let $K$ be a compact set in $\cplx^n$ and let $\smoo(K)$ be the class of  of all continuous complex-valued functions on $K$. By $\poly(K)$, we denote the space of all those functions on $K$ which are uniform limits of polynomials in $z_1,z_2,\cdots,z_n$. One of the fundamental question in the theory of uniform algebras is to characterize compact subset of $\cplx^n$ for which 
	\begin{equation}\label{E:approx}
		\poly(K)=\smoo(K).
	\end{equation}
	
	\noindent From the theory of commutative Banach algebras (see \cite{Gamelin} for details),  we notice that
	\[
	\poly(K)=\smoo(K)\impl \hull{K}=K. 
	\]
	Therefore, polynomial convexity is a necessary condition for all compacts $K$ of $\cplx^n$ having property (\ref{E:approx}). Lavrentiev \cite{Lv} showed that for $K\subset \cplx,$ $\poly(K)=\smoo(K)$ if and only if $\hull{K}=K$ and $int(K)=\emptyset.$ In higher dimension, no such characterization is known. The condition $\cplx\setminus K$ connected generalizes to polynomial convexity of $K.$ A generalization of $int(K)=\emptyset$ condition is that the compact $K$ is totally real except a small set of points. Recall that a $C^{1}$-smooth submanifold $M$ of $\cplx^{n}$ is said to be \textit{totally real} at $p\in M$ if $T_{p}M\cap iT_{p}M=\{0\},$ where the tangent space $T_{p}M$ is viewed as a real linear subspace of $\cplx^{n}.$ Otherwise, we say that $M$ has complex tangent at $p.$ The manifold $M$ is said to be totally real if it is totally real at all points of $M$ and a subset $K$ of $\cplx^n$ is said to be totally real if it is locally contained in a totally real manifold. \Cref{R:W264} due to Wermer says that a totally real submanifold of $\cplx^{2}$ is locally polynomially convex. H$\ddot{o}$rmander and Wermer \cite{HoW68} generalized this result for smooth totally real submanifold of $\cplx^{n}.$ Smoothness is reduced to $C^{1}$ due to Harvey and Wells \cite{HarWel71,HarWel72}. For polynomially convex set $K$, there are several papers, for instance see \cite{AIW1, AIW2, OPW, St1, W1},
	that describe situations when \eqref{E:approx} holds. For $n>1,$ the most general result known in this direction is the following due to O'Farrell, Preskenis and Walsh \cite{OPW}:

	\begin{result}[O'Farrell, Preskenis and Walsh]\label{R:OPW}
		Let K be a compact polynomially convex subset of $\cplx^n$. Assume that $E$ is a closed subset of $K$ such that
		$K\setminus E$ is locally contained in totally-real manifold. Then
		\[
		\poly(K)=\{f\in \smoo(K): f|_E\in \poly(E)\}.
		\]
	\end{result}
	From \Cref{R:OPW}, we can say that any compact polynomially convex subset of a totally real submanifold of $\cplx^n$ enjoys property (\ref{E:approx}). Therefore, to answer the approximation question mentioned in the beginning or (\ref{E:approx}), one requires to answer certain polynomial convexity question. Gorai \cite{SG17}, Chi \cite {KPC}, Zajec \cite{MZ} gave some results regarding the polynomial convexity of a compact that lies in a totally real submanifold of $\cplx^{n}.$ When the graph has certain isolated points with complex tangents (CR-singularity), the question of local polynomial convexity near the CR-singularity becomes more complicated. The study has been initiated with Bishop's \cite{Bishop65} work of attaching analytic discs. Forstneri\v c-Stout \cite{ForstSto91} proved local approximation near certain type of CR-singularity. In this paper, we will focus on sets with certain particular forms which has certain CR-singularity. First, we mention a result by Bharali \cite{Bha11}, which was a motivating factor for our study.
	
	\begin{result}[Bharali]\label{R:Bharli}
		Let $\overline{D}$ be a closed disc in $\mathbb{C}$ with center at the origin. Let $F$ be a function defined by $F(z)=\bar{z}^m+R(z), z\in \overline{D}, \text{ where } m\in \mathbb{Z}_{+}$ and  $R$ satisfies 
		
		\begin{align*}
			|R(z)-R(\xi)|< \left|z^{m}-\xi^{m}\right|~~ \forall z,\xi\in \overline{D}:z^m\ne \xi^m.
		\end{align*}
		Then $\Gamma:=Gr_{\overline{D}}(F), \text{the graph of F over }\overline{D}$ is polynomially convex.
		Additionally, we can conclude that $[z,F]_{\overline{D}}=\mathcal{C}(\overline{D})$ in the following cases:
		\begin{itemize}
			\item whenever m=1 (with no further conditions on R)\\
			\item if $m\ge 2, R\in \mathcal{C}^{1}(\overline{D})$ and there exist $\alpha\in (0,1)$ such that $R$ satisfies the stronger estimate:
			$$|R(z)-R(\xi)|< \alpha\left|z^{m}-\xi^{m}\right|~~ \forall  z,\xi\in \overline{D}:z^m\ne \xi^m.$$
		\end{itemize}
	\end{result}
	
	\begin{remark}
		In \Cref{R:Bharli}, the graph has a CR-singularity at the origin.
	\end{remark}	
	\par We now present a generalization of \Cref{R:Bharli} for $n\ge 2.$ This case differs from \Cref{R:Bharli} in the sense that the set of points where the tangent space of the corresponding graph fails to be totally real is non-isolated. It is the graph over an analytic variety of $D$ of co-dimension one.
	\begin{theorem}\label{T:Main Result2}
		\noindent	Let $D$ be an open polydisc in $\mathbb{C}^n$ with centre at the origin and let $\Omega$ be a neighborhood of $\overline{D}$. Let $F_1,F_2,\cdots,F_n$ are functions on $\Omega$ defined by $F_{j}(z)=\bar{z}^{m_j}_{j}+R_{j}(z) \text{ where } m_{j}\in \mathbb{N} \text{ for all } j=1,2,\cdots, n,$ and  $R=(R_1,R_2,\cdots,R_n)$ satisfies 
		\begin{align}\label{E:hypo}
			|R(z)-R(\xi)|\le c \left(\sum_{j=1}^n \left|z_j^{m_{j}}-\xi_j^{m_{j}}\right|^2\right)^{\frac{1}{2}}, 
		\end{align}                                                           \sloppy	$\text{for all } z= (z_1,\cdots,z_n),~~\xi = (\xi_1,\cdots,\xi_n)\in \Omega~~\text{ and for some } c\in(0,1).$ Then $\Gamma:=Gr_{\overline{D}}(F),$ the graph of $F$ over $\overline{D}$ is polynomially convex.
		Additionally, $\text{ if }$ R is $\smoo^1$-smooth on $\Omega$ then we can conclude that
		\begin{align*}      
			[z_{1},\cdots, z_{n}, \bar{z_1}^{m_{1}} + R_{1}(z),\cdots , \bar{z_{n}}^{m_{n}} + R_{n}(z); \overline{D}] = C(\overline{D}).
		\end{align*}
	\end{theorem}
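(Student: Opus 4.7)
The plan is to split the theorem into its two assertions and dispose of them in order: polynomial convexity first, by a representing-measure argument tailored to the structure of $F$, and then the density conclusion by appeal to the O'Farrell--Preskenis--Walsh theorem (\Cref{R:OPW}).

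For polynomial convexity, fix an arbitrary $(z^0, w^0) \in \hull{\Gamma}$. Applying the coordinate polynomials $z_j$ at once gives $z^0 \in \overline{D}$, so only $w^0 = F(z^0)$ needs justification. Standard uniform-algebra theory produces a probability measure $\mu$ on $\Gamma$ representing $(z^0, w^0)$, i.e.\ $\int p \, d\mu = p(z^0, w^0)$ for every holomorphic polynomial $p$ in $(z,w)$. The decisive step is to test this against
\[
P(z,w) \;:=\; \sum_{j=1}^{n}\bigl(z_j^{m_j} - (z_j^0)^{m_j}\bigr)\bigl(w_j - F_j(z^0)\bigr),
\]
which vanishes at $(z^0,w^0)$. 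On $\Gamma$ this same polynomial equals $\sum_j |z_j^{m_j} - (z_j^0)^{m_j}|^2 + \sum_j (z_j^{m_j} - (z_j^0)^{m_j})(R_j(z) - R_j(z^0))$, and a single Cauchy--Schwarz step combined with \eqref{E:hypo} bounds the real part of the second sum from below by $-c \sum_j|z_j^{m_j} - (z_j^0)^{m_j}|^2$. Taking real parts and integrating against $\mu$ yields $(1-c)\int \sum_j |z_j^{m_j}-(z_j^0)^{m_j}|^2\, d\mu \le 0$; since $c<1$, the integrand is $\mu$-a.e.\ zero. Hence $z_j^{m_j} = (z_j^0)^{m_j}$ $\mu$-a.e., whence also $\bar z_j^{m_j} = \overline{(z_j^0)^{m_j}}$ a.e.; invoking \eqref{E:hypo} once more gives $R_j(z) = R_j(z^0)$ a.e. Integrating $w_j = \bar z_j^{m_j} + R_j(z)$ against $\mu$ then yields $w_j^0 = \overline{(z_j^0)^{m_j}} + R_j(z^0) = F_j(z^0)$, as required.

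For the approximation assertion, assume $R \in \smoo^1(\Omega)$. Differentiating \eqref{E:hypo} recasts the Lipschitz bound as an operator-norm estimate $\|(\partial R/\partial \bar z)(z)\, D(z)^{-1}\|_{\mathrm{op}} \le c$, where $D(z) := \mathrm{diag}(m_j \bar z_j^{m_j-1})$. Hence $\partial F/\partial \bar z = (I + (\partial R/\partial \bar z) D^{-1}) D$ is nonsingular wherever $D$ is, i.e.\ wherever $z_j \ne 0$ for every $j$ with $m_j > 1$. Set $\Sigma := \bigcup_{j:\, m_j > 1}\{z \in \overline D : z_j = 0\}$ and $E := \{(z, F(z)) : z \in \Sigma\}$; then $\Gamma \setminus E$ is totally real, and, together with the polynomial convexity established above, \Cref{R:OPW} yields $\poly(\Gamma) = \{f \in \smoo(\Gamma) : f|_E \in \poly(E)\}$. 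It thus remains to prove $\poly(E) = \smoo(E)$.

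I would complete this final step by induction on $\#\{j : m_j > 1\}$ together with a modest generalization of the theorem itself. Each slice $E_j := \{(z,F(z)) : z_j = 0\}$, after suppressing its constant $j$-th coordinate, is the graph over an $(n-1)$-dimensional polydisc of an $n$-tuple of functions of the same shape as $F$, except that the $j$-th output has lost its $\bar z_j^{m_j}$ term and is now a pure Lipschitz perturbation; the argument of the preceding paragraph still runs in this enlarged shape, and the smallness of the extra component keeps each slice totally real off its own (smaller) CR-singular set. The finite union $E = \bigcup_{j:\, m_j>1} E_j$ is then handled by iterated application of \Cref{R:OPW}, peeling off one piece at a time, since each intersection $E_{j_1}\cap\cdots\cap E_{j_r}$ is a still lower-dimensional graph to which the inductive hypothesis applies. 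The main obstacle I anticipate is this last phase: formulating the correct intermediate statement that tolerates extra purely-Lipschitz output coordinates, and verifying that the OPW-peeling of the finite intersecting union closes cleanly.
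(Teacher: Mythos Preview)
Your polynomial convexity argument is correct and essentially matches the paper's: both rest on the polynomial $\sum_j (z_j^{m_j} - \xi_j^{m_j})(w_j - F_j(\xi))$ and the Cauchy--Schwarz step yielding $\rl(\cdot) \ge (1-c)\sum_j|z_j^{m_j}-\xi_j^{m_j}|^2$ on $\Gamma$. You integrate this against a representing measure; the paper packages the same inequality through a plurisubharmonic hull argument (\Cref{P:poly cnvx1}, \Cref{L:main lemma}). The two are interchangeable.

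For the approximation half, your plan is workable but the paper takes a shorter route. Instead of iterating \Cref{R:OPW} through the pieces $E_j$ and their intersections---which, as you correctly anticipate, forces a generalized inductive statement accommodating the extra purely-Lipschitz output component that appears on each slice---the paper applies the Samuelsson--Wold stratification theorem (\Cref{R:Samuel-Wold}) once. That result says: if $X$ is polynomially convex and carries a chain $X_0\subset\cdots\subset X_N=X$ of closed subsets with each $X_k\setminus X_{k-1}$ totally real and $\poly(X_0)=\smoo(X_0)$, then $\poly(X)=\smoo(X)$. The paper filters $\Gamma$ by graphs over the coordinate subvarieties $Z_k=\{z\in\Omega:\text{at least }n-k\text{ coordinates vanish}\}$, terminating at the single point $X_0=\{0\}$; \Cref{P:Cplx_tngnt}---which is precisely the totally-real-on-coordinate-slices fact your induction would need---verifies that each stratum difference is totally real. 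This dispatches the approximation in one stroke. Your iterated-OPW scheme would also succeed (each intermediate set is cut from $\Gamma$ by polynomial equations and is therefore polynomially convex, and \Cref{P:Cplx_tngnt} supplies the slice-wise totally-real statements), but you would in effect be reproving the relevant instance of Samuelsson--Wold by hand.
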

	
	\par We now discuss the situation where $\nu_j\ne 1$ for at least one $j\in\{1,\cdots,n\}.$ For $n=1,$ in \cite{Min76}, Minsker proved the following. 	
	\begin{result}[Minsker]\label{R:Min}
		Let $k,l \in \mathbb{N}$ such that $\gcd(k,l)=1$ and let $\mathbb{D}$ be an open unit disc in $\mathbb{C}.$ Then $[z^k,\bar{z}^l;\overline{\mathbb{D}}] = C(\overline{\mathbb{D}}).$
	\end{result}
	\noindent In \cite{DP1}, De Paepe gave the following generalization of  \Cref{R:Min}:
	\begin{result}\label{R:Dp1}
		Let $F(z)=z^m(1+f(z)),$ $G(z)=\bar{z}^n(1+g(z))$ where $f$ and $g$ are functions defined in a neighborhood  of the origin, of class $C^1,$ with $f(0)=0,$ $g(0)=0.$ If $\gcd(m,n)=1$ and if $D$ is a sufficiently small closed disc around $0$ then $[F,G;D]=C(D).$
	\end{result} 
	
	\noindent Similar type of result can be found in O'Farrell-Preskenis \cite{OP2}, De Paepe \cite{DP1,DP2,DP4,DP5,DP01}, De Paepe and Wiegerinck \cite{DPJW5}, O'Farrell and De Paepe \cite{OFDP3}, Chi \cite{KPC}. But we did not find any result when $\nu_{j}>1,$ for at least one $j\in\{1,2,\cdots,n\},$ and $n>1$ in the literature. In this article, we present the following result.
	In this case, the corresponding graph has non-isolated CR-singularity.	
	\begin{theorem}\label{T:Minsker_Scv}
		\noindent	Let $\mathbb{D}^{n}$ be the open unit polydisc in $\mathbb{C}^n$ and $m_{j}\in \mathbb{N} \text{ for all } j=1,2,\cdots, 2n$ and $\gcd(m_{i},m_{j})=1$ for all $i\ne j.$ Then                                                                          
		\begin{align*}      
			[z^{m_1}_{1},\cdots, z^{m_n}_{n}, \bar{z_1}^{m_{n+1}},\cdots , \bar{z_{n}}^{m_{2n}}; \overline{\mathbb{D}}^{n}] = C(\overline{\mathbb{D}}^{n}).
		\end{align*}
	\end{theorem}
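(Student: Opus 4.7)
The plan is to reduce the statement to the one-variable Minsker theorem (\Cref{R:Min}) applied one coordinate at a time. Denote the algebra in question by
\[
A=[z_{1}^{m_{1}},\ldots,z_{n}^{m_{n}},\bar{z}_{1}^{m_{n+1}},\ldots,\bar{z}_{n}^{m_{2n}};\overline{\mathbb{D}}^{n}].
\]
The hypothesis $\gcd(m_{i},m_{j})=1$ for all $i\neq j$ covers the $n$ pairs $(j,n+j)$, so \Cref{R:Min} gives $[w^{m_{j}},\bar{w}^{m_{n+j}};\overline{\mathbb{D}}]=\smoo(\overline{\mathbb{D}})$ for each $j\in\{1,\ldots,n\}$. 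In particular, there exist complex polynomials $p_{k}^{j},q_{k}^{j}$ in two variables with
\begin{align*}
p_{k}^{j}(w^{m_{j}},\bar{w}^{m_{n+j}})&\longrightarrow w,\\
q_{k}^{j}(w^{m_{j}},\bar{w}^{m_{n+j}})&\longrightarrow \bar{w}
\end{align*}
uniformly on $\overline{\mathbb{D}}$ as $k\to\infty$.

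Next I would lift these one-variable polynomials to the $n$-variable setting. Regarding $p_{k}^{j}$ and $q_{k}^{j}$ as polynomials in the $2n$ generators of $A$ which depend only on the $j$-th and the $(n+j)$-th of them, substitution produces elements $p_{k}^{j}(z_{j}^{m_{j}},\bar{z}_{j}^{m_{n+j}})$ and $q_{k}^{j}(z_{j}^{m_{j}},\bar{z}_{j}^{m_{n+j}})$ of $A$. Since each of these functions depends only on the coordinate $z_{j}$ and since $z_{j}\in\overline{\mathbb{D}}$ whenever $z\in\overline{\mathbb{D}}^{n}$, the one-variable estimate transfers verbatim:
\[
\sup_{z\in\overline{\mathbb{D}}^{n}}\bigl|p_{k}^{j}(z_{j}^{m_{j}},\bar{z}_{j}^{m_{n+j}})-z_{j}\bigr|=\sup_{w\in\overline{\mathbb{D}}}\bigl|p_{k}^{j}(w^{m_{j}},\bar{w}^{m_{n+j}})-w\bigr|\longrightarrow 0,
\]
and the analogous statement holds for $q_{k}^{j}$ and $\bar{z}_{j}$. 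Hence $z_{j},\bar{z}_{j}\in A$ for every $j\in\{1,\ldots,n\}$.

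To conclude, note that $A$ is by construction a uniformly closed subalgebra of $\smoo(\overline{\mathbb{D}}^{n})$ that contains the constants, and now also the coordinate functions $z_{1},\ldots,z_{n},\bar{z}_{1},\ldots,\bar{z}_{n}$. Therefore $A$ separates the points of $\overline{\mathbb{D}}^{n}$ and is closed under complex conjugation, so the Stone--Weierstrass theorem forces $A=\smoo(\overline{\mathbb{D}}^{n})$.

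There is no genuine obstacle once \Cref{R:Min} is available; the multivariate promotion is purely formal. I remark that the argument actually requires only $\gcd(m_{j},m_{n+j})=1$ for each $j\in\{1,\ldots,n\}$ rather than full pairwise coprimality of all $2n$ exponents, but the stronger hypothesis is natural given the companion results on graphs with non-isolated CR-singularities.
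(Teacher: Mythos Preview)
Your proof is correct and takes a genuinely different, more elementary route than the paper's.

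The paper works in $\cplx^{2n}$ throughout: it introduces the proper holomorphic map $\Phi(z,w)=(z_{1}^{m_{1}},\ldots,z_{n}^{m_{n}},w_{1}^{m_{n+1}},\ldots,w_{n}^{m_{2n}})$, decomposes $\Phi^{-1}(X)$ into $\prod_{j}m_{j}$ pieces $X_{I}$, each a real-linear image of $\overline{\mathbb{D}}^{n}\subset\rea^{2n}$, and then glues these pieces together via repeated applications of Kallin's lemma (\Cref{R:Sto_apprx.}), using the polynomials $p_{k}(z,w)=\prod z_{j}w_{j}$ and a combinatorial lemma on arguments (\Cref{L:Argumnt}) to separate the images. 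Polynomial convexity and approximation for $X$ then follow from \Cref{R:Sto_propr}. Your argument bypasses all of this by invoking the one-variable Minsker theorem (\Cref{R:Min}) on each coordinate separately to put $z_{j}$ and $\bar{z}_{j}$ into $A$, after which Stone--Weierstrass finishes immediately.

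Two remarks. First, your observation that only $\gcd(m_{j},m_{n+j})=1$ is needed is correct and is a genuine strengthening of the stated hypothesis; the paper's argument, by contrast, uses the full pairwise coprimality in \Cref{L:Argumnt}. Second, the reason the paper develops the heavier machinery is that it is reused essentially verbatim for \Cref{T:Main Result1}, where perturbations $R_{j}(z)$ depending on all coordinates are present and the simple coordinate-by-coordinate reduction to \Cref{R:Min} is no longer available. So your route is optimal for this theorem in isolation, while the paper's route is an investment that pays off in the next section.
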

	
	Let $\delta$ be a positive real number. By $D(\delta),$ we denote the open polydisc $D(\delta):=\{z\in \cplx^{n}:|z_{j}|< \delta,j=1,\cdots,n\}$ in $\cplx^{n}$ with polyradius $(\delta,\cdots,\delta).$	
	\begin{theorem}\label{T:Main Result1}
		Let $m_{1}, m_{2},\cdots, m_{2n}$ be positive integers. Fix $\delta_{0}>0.$ Consider a map $R=(R_1,R_2,\cdots,R_n)$ with values in $\mathbb{C}^n,$ defined and of class $C^{1}$ in $D(\delta_{0})$. Suppose that there is a constant $0<c<1$ such that 
		\begin{enumerate}
			\item[(i)]	$|R(z)-R(\xi)|\le c \left(\sum_{j=1}^n \left|z_j^{m_{n+j}}-\xi_j^{m_{n+j}}\right|^2\right)^{\frac{1}{2}}	~~\forall z= (z_1,\cdots,z_n),~~\xi = (\xi_1,\cdots,\xi_n)\in D(\delta_{0});$ 
			\item[(ii)] $R_{j}(z) \sim o(|z_{j}|^{m_{n+j}}) \text{ as } z_j\to 0, \text{ for all } j=1,2,\cdots,n.$ 
		\end{enumerate}
		Then there exist $\delta,$ $0<\delta<\delta_{0},$ such that for 	
		$K:=\{(z^{m_{1}}_{1},\cdots, z^{m_{n}}_{n}, \bar{z_1}^{m_{n+1}} + R_{1}(z),\cdots , \bar{z_{n}}^{m_{2n}} + R_{n}(z)): z\in \overline{D(\delta)}\}, \poly(K)=\smoo(K).$ Furthermore, if $\gcd(m_{i}, m_{j})=1$ $\forall i\ne j,$ then 
		\begin{align*}
			[z^{m_{1}}_{1},\cdots, z^{m_{n}}_{n}, \bar{z_1}^{m_{n+1}} + R_{1}(z),\cdots , \bar{z_{n}}^{m_{2n}} + R_{n}(z); \overline{D(\delta)}] = C(\overline{D(\delta)}).
		\end{align*}
	\end{theorem}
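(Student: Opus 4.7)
The plan is to bootstrap from \Cref{T:Main Result2} applied to the auxiliary map $F(z) := (\bar{z_1}^{m_{n+1}} + R_1(z), \ldots, \bar{z_n}^{m_{2n}} + R_n(z))$. Condition (i) of the present theorem is exactly \eqref{E:hypo} of \Cref{T:Main Result2} with $m_j$ replaced by $m_{n+j}$, and $R$ is $\smoo^1$, so for any $\delta < \delta_0$ \Cref{T:Main Result2} applies to $F$ on $\overline{D(\delta)}$ and yields polynomial convexity of the graph $\Gamma_F := \{(z, F(z)) : z \in \overline{D(\delta)}\} \subset \cplx^{2n}$ together with the algebra equality $[z_1, \ldots, z_n, F_1, \ldots, F_n; \overline{D(\delta)}] = \smoo(\overline{D(\delta)})$.

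Define the polynomial map $\sigma : \cplx^{2n} \to \cplx^{2n}$ by $\sigma(z, \eta) := (z_1^{m_1}, \ldots, z_n^{m_n}, \eta_1, \ldots, \eta_n)$, so that $\sigma(\Gamma_F) = K$. By the standard fact that $\widehat{P(X)} = P(\hat X)$ for any polynomial map $P : \cplx^N \to \cplx^M$ and compact $X \subset \cplx^N$, applied to $X = \Gamma_F$ and $P = \sigma$, we get $\hat K = \sigma(\widehat{\Gamma_F}) = \sigma(\Gamma_F) = K$, so $K$ is polynomially convex.

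With polynomial convexity in hand, I would prove $\poly(K) = \smoo(K)$ by applying \Cref{R:OPW}. A direct computation of the real differential of $\Phi := (\tau, F)$, where $\tau(z) := (z_1^{m_1}, \ldots, z_n^{m_n})$, at any $z$ with all $z_j \ne 0$ shows --- using condition (ii) to dominate the perturbation terms $\partial R_k/\partial z_j$ and $\partial R_k/\partial \bar z_j$ by the principal entries $m_j z_j^{m_j-1}$ and $m_{n+j}\bar z_j^{m_{n+j}-1}$ for $\delta$ sufficiently small --- that $\Phi$ is a local diffeomorphism onto a totally real $2n$-submanifold of $\cplx^{2n}$ at such points. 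Setting $E := \Phi(\bigcup_{j=1}^n \{z_j = 0\} \cap \overline{D(\delta)})$, the complement $K \setminus E$ is locally contained in a totally real manifold, and $E$ itself is polynomially convex, again by the polynomial-image theorem applied to $\sigma(\Gamma_F \cap \{\prod_j z_j = 0\})$. \Cref{R:OPW} then reduces the approximation claim to $\poly(E) = \smoo(E)$. I would prove the latter by induction on $n$: each stratum $\Phi(\bigcap_{j \in S}\{z_j = 0\})$, $S \subset \{1, \ldots, n\}$, is itself a $K$-type set for the $(n - |S|)$-variable subproblem with exponents $(m_k)_{k \notin S}$, $(m_{n+k})_{k \notin S}$ and restricted perturbation $R|_{\{z_j = 0,\, j \in S\}}$ (which still satisfies (i) and (ii)); iterating \Cref{R:OPW} through the stratification --- terminating at the trivial stratum $\Phi(\{\mathbf 0\}) = \{\mathbf 0\}$ --- delivers $\poly(E) = \smoo(E)$.

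For the \emph{furthermore} clause under pairwise coprimality, I would show $\Phi$ is injective on $\overline{D(\delta)}$. If $\Phi(z) = \Phi(z')$, then $z_j^{m_j} = {z'_j}^{m_j}$ and $\bar z_j^{m_{n+j}} - \bar{z'_j}^{m_{n+j}} = R_j(z') - R_j(z)$ for each $j$; squaring, summing, and applying (i) gives $\sum_j |z_j^{m_{n+j}} - {z'_j}^{m_{n+j}}|^2 = |R(z') - R(z)|^2 \le c^2 \sum_j |z_j^{m_{n+j}} - {z'_j}^{m_{n+j}}|^2$ with $c < 1$, forcing $z_j^{m_{n+j}} = {z'_j}^{m_{n+j}}$ for every $j$. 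A standard root-of-unity argument using $\gcd(m_j, m_{n+j}) = 1$ (which follows from pairwise coprimality) then yields $z' = z$. Hence $\Phi : \overline{D(\delta)} \to K$ is a homeomorphism, the induced isometric algebra isomorphism $\Phi^* : \smoo(K) \to \smoo(\overline{D(\delta)})$ sends $\poly(K)$ onto $[z_1^{m_1}, \ldots, z_n^{m_n}, F_1, \ldots, F_n; \overline{D(\delta)}]$, and $\poly(K) = \smoo(K)$ translates directly into the asserted algebra equation. I expect the main technical obstacle to be the careful verification of total realness of $K \setminus E$ in the presence of the perturbation $R$, which relies crucially on (ii) to control the order of vanishing of $R_j$ and its first derivatives near $\{z_j = 0\}$.
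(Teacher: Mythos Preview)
Your argument has a genuine gap at the very first step: the ``standard fact'' $\widehat{P(X)} = P(\hat X)$ for polynomial maps is false. Only the inclusion $P(\hat X) \subset \widehat{P(X)}$ holds in general. A counterexample in exactly the spirit of your map $\sigma$: take $P(z)=z^2$ on $\cplx$ and let $X$ be the closed upper unit semicircle; then $X$ is polynomially convex but $P(X)$ is the full unit circle, whose hull is the closed disc. So from the polynomial convexity of $\Gamma_F$ you cannot conclude that $K=\sigma(\Gamma_F)$ is polynomially convex, and the same fallacy recurs when you assert that $E$ is polynomially convex ``by the polynomial-image theorem.'' Since \Cref{R:OPW} needs $K$ polynomially convex, the whole chain breaks.

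The paper proceeds in the opposite direction. Because $\sigma$ is proper, \Cref{R:Sto_propr} says $K$ is polynomially convex and $\poly(K)=\smoo(K)$ \emph{iff} the full preimage $\sigma^{-1}(K)$ has these properties. But $\sigma^{-1}(K)$ is not $\Gamma_F$; it is the union $\bigcup_I X_I$ over $I\in\prod_{j=1}^n\{0,\dots,m_j-1\}$ of $m_1\cdots m_n$ rotated copies of $\Gamma_F$. Each $X_I$ individually satisfies $\poly(X_I)=\smoo(X_I)$ by \Cref{T:Main Result2}, but gluing them is the real work: the paper builds separating polynomials $p(z,w)=\prod_j z_j^{m_{n+j}}w_j$ and applies Kallin's lemma (\Cref{R:Sto_apprx.}) iteratively. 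This is precisely where hypothesis~(ii) is used --- it forces $p(X_I)$ into a thin angular sector about a ray determined by $I$, so that distinct sectors meet only at the origin. In your outline, by contrast, you invoke~(ii) only for total realness of $K\setminus E$; but that follows from hypothesis~(i) alone (the derivative bound one extracts from~(i) is exactly the mechanism of \Cref{P:Cplx_tngnt}), so~(ii) is left doing no work --- a symptom that an essential ingredient has been bypassed.

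Your treatment of the \emph{furthermore} clause (injectivity of $\Phi$ under pairwise coprimality via condition~(i) and $\gcd(m_j,m_{n+j})=1$, then the induced algebra isomorphism) is correct and coincides with the paper's Step~III.
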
 
	\begin{remark}
		Our proof restricts $R_{j}$ to have the property. We do not know what happens when $R_{j}(z) \sim o(|z|^{m_{n+j}})\text{ for all } j=1,2,\cdots,n.$
	\end{remark}	
	\begin{remark}
		If $m_{j}=1~\forall j=1,\cdots,n,$ then, \Cref{T:Main Result2} says that property (ii) is not required, and in this case, we will have a global result.
	\end{remark}

	Let $X$ be a subset of $\cplx^n.$ $X$ is said to be \textit{stratified totally real set} if there exists finitely many closed sets $X_j,j=1,\cdots,N$ such that $X_j\setminus X_{j-1}$ is a totally real set and $X_{0}\subset\cdots\subset X_{N}=X$ with $X=\cup_{j} X_j.$ We now provide few remarks about the proof.
	
	\begin{itemize}
		\item[(i)] For the proof of \Cref{T:Main Result2}, we first show that graph of $F$ is polynomially convex by extending a result  presented in \cite[Proposition 1.7]{Bha11} to higher-dimension. For uniform approximation, we first locate the set of non-totally real points (\Cref{P:Cplx_tngnt}), and then we give a totally real stratification for the graph. Finally, we conclude \Cref{T:Main Result2} by applying \Cref{R:Samuel-Wold} due to Samuelsson and Wold \cite{SaW12}.
		
		\item[(ii)] To prove \Cref{T:Minsker_Scv}, we first use a suitable proper holomorphic map $\Psi$ from $\cplx^{2n}$ to $\cplx^{2n}$ such that	the preimage of 
		$X:=\{(z^{m_{1}}_{1},\cdots, z^{m_{n}}_{n}, \bar{z_1}^{m_{n+1}},\cdots , \bar{z_{n}}^{m_{2n}})\in \cplx^{2n}: z\in \overline{\mathbb{D}}^{n} \}$ is the finite union of $X_{I},$ where each $X_{I}$ is the graph of some linear function. Applying  Stone-Weierstrass approximation theorem, we show that $\poly(X_{I})=\smoo(X_I)$ for each $I.$ After that repeated use of Kallin's lemma (\Cref{R:Sto_apprx.}) gives the polynomial convexity of $\cup_{I}X_{I}.$ Approximation then follows from \Cref{R:Sto_propr}.

		\item[(iii)]  The method of the proof of \Cref{T:Main Result1} is similar to that of \Cref{T:Main Result2}. In this case, we also find a proper holomorphic map $\Phi:\cplx^{2n}\to\cplx^{2n}$ such that the preimage of $X:=\{(z^{m_{1}}_{1},\cdots, z^{m_{n}}_{n}, \bar{z_1}^{m_{n+1}} + R_{1}(z),\cdots , \bar{z_{n}}^{m_{2n}} + R_{n}(z)):z\in \overline{D}\}$ is the finite union of $X_{I},$ where $D$ is some small polydisc centred at the origin. Using \Cref{T:Main Result2}, we show that $\poly(X_{I})=\smoo(X_{I})$ for each $I.$ We find a holomorphic polynomial $p:\cplx^{2n}\to\cplx$ such that each $p(X_{I})$ is contained in an angular sector $\omega_{I}\subset \cplx$ such that $\omega_{I}\cap\omega_{J}=\{0\}~~\forall I\ne J$ and $p^{-1}\{0\}\cap(\cup_{I}X_{I})$ is polynomially convex. Then by repeated application of Kallin's lemma (\Cref{R:Sto_apprx.}), we conclude that $\poly(\cup_{I}X_{I})=\smoo(\cup_{I}X_{I}).$ Since $z^{m_{1}}_{1},\cdots, z^{m_{n}}_{n}, \bar{z_1}^{m_{n+1}} + R_{1}(z),\cdots , \bar{z_{n}}^{m_{2n}} + R_{n}(z)$ separates points $\overline{D}$, we conclude \Cref{T:Main Result1}, by applying \Cref{R:Sto_propr}.
	\end{itemize}
	
	The paper is organized as follows. \Cref{S:technical} collects some earlier results that we will be using in this paper. We also state and proof of a result that characterizes complex points of certain graphs. We also discuss a result that gives a class of continuous functions having polynomially convex graphs. In \Cref{S:Main Result2}, we give a proof of \Cref{T:Main Result2}. \Cref{S:Minsker_SCV} and \Cref{S:Main Result1} are devoted  to the proof of \Cref{T:Minsker_Scv} and \Cref{T:Main Result1} respectively.

	\section{Technical Results}\label{S:technical}

	We begin this section by mentioning few known results that will be used in  the proofs of our theorems. The first one is due to H\"{o}rmander \cite{H}. Let ${\sf psh}(\OM)$ denotes the collection of all plurisubharmonic functions on $\Omega.$
	
	\begin{result}[H\"{o}rmander]\label{R:hormander}
		Let $K$ be a compact subset of a pseudoconvex open set $\OM \subset \cplx^n$. Then $\hull{K}_{\OM} = \hull{K}_{\OM}^P $, where $\hull{K}_{\OM}^P:=\left\lbrace z \in \OM: u(z) \leq\sup\nolimits_{K} u \;\forall u \in {\sf psh}(\OM) \right\rbrace$ and $\hull{K}_\OM:=
		\big\lbrace z\in \OM : |f(z)|\leq \sup\nolimits_{z\in K} |f(z)|\;\forall f\in \hol(\OM)\big\rbrace$.
	\end{result}
	\noindent In case $\OM= \cplx^n,$ \Cref{R:hormander} says that the polynomially convex hull of $K$ is the same as the plurisubharmonically convex hull of $K$.
	\smallskip
	
	We now state a couple of results from Stouts book \cite{Sto07}. The first one \cite[Theorem 1.6.19]{Sto07} is a lemma due to Eva Kallin \cite{Kallin65} and is often referred to as \textit{Kallin’s lemma.} 
	
	\begin{result}\label{R:Sto_apprx.}
		Let $X_1$ and $X_{2}$ be compact, polynomially convex subset of $\cplx^{n}.$ Let $p$ be a polynomial such that $\widehat{p(X_{1})}\cap\widehat{p(X_{2})}\subset\{0\}.$ If the set $p^{-1}(0)\cap(X_1\cup X_2)$ is polynomially convex, then the set $X_{1}\cup X_{2}$ is polynomially convex. If, in addition, $\poly(X_{j})=\smoo(X_{j}), j=1,2,$ then $\poly(X_{1}\cup X_{2})=\smoo(X_{1}\cup X_{2}).$
	\end{result}
	
	\begin{result}\label{R:Sto_propr}
		If $F:\mathbb{C}^n\to \mathbb{C}^n$ is a proper holomorphic map, and if $X\subset \mathbb{C}^n$ is a compact set, then the set $X$ is polynomially convex if and only if the set $F^{-1}(X)$ is
		polynomially convex, and $\mathcal{P}(X) =\mathcal{C}(X)$ if and only if $\mathcal{P}(F^{-1}(X)) =\mathcal{C}(F^{-1}(X)).$
	\end{result}

	\noindent	We now state an approximation theorem on stratified totally real set due to Samuelsson and Wold \cite{SaW12}. Let $\hol(X_{0})$ be the collection of all of all holomorphic function on $X_{0}.$
	\begin{result}\label{R:Samuel-Wold}
		Let $X$ be a polynomially convex compact set in $\mathbb{C}^n$ and assume that there are closed sets $X_{0}\subset\cdots\subset X_{N}=X$ such that $X_{j}\setminus X_{j-1},$ $j=1,\cdots,N,$ is a totally real set. Then $[z_1,\cdots,z_{n}]_X=\{f\in \smoo(X): f|_{X_{0}}\in \hol(X_{0})\}.$ In particular, if $C(X_0)=\mathcal{O}(X_0)$ then $C(X) = [z_1,\cdots, z_n]_X.$
	\end{result}
	
	\smallskip

	\noindent The next lemma that we state and prove gives a characterization when the graph $\Gamma$ in the statement of \Cref{T:Main Result2} is totally real. This will play a vital role in our proof of \Cref{T:Main Result2}. 
	
	\noindent	Let $D$ be an open polydisc in $\mathbb{C}^n$ with center at the origin and let $\Omega$ be a neighborhood of $\overline{D}$. Let $F_1,F_2,\cdots,F_n$ are functions defined by $F_{j}(z)=\bar{z}^{m_j}_{j}+R_{j}(z) \text{ on } \Omega \text{ where } m_{j}\in \mathbb{N} \text{ for all } j=1,2,\cdots, n$ and  $R=(R_1,R_2,\cdots,R_n)$ satisfies (\ref{E:hypo}). Also assume that $R$ is $\smoo^1$-smooth on $\Omega.$ We define a map $\Phi:\Omega\to \mathbb{C}^{2n}$ by
	\begin{align*}
		\Phi(z_1,\cdots,z_n):=(z_1,\cdots,z_n,F_1(z),\cdots , F_{n}(z)).
	\end{align*}
	For $k\le n,$ let $\widetilde{\Omega}$ be an open set in $\cplx^{k}$ and $g:\widetilde{\Omega}\hookrightarrow\Omega$ be an embedding defined by $g(z_1,\cdots,z_k):=(z_1,\cdots,z_k,0,\cdots,0).$ Set $M:=(\Phi\circ g)(\widetilde{\Omega})=Gr_{g(\widetilde{\Omega})}(F),$ which is a real submanifold of $\cplx^{2n}$ of dimension $2k.$  
	
	\begin{proposition}\label{P:Cplx_tngnt}	
		$M$ has complex tangents at $(\Phi\circ g)(z)$ if and only if $z\in  \{(z_1,\cdots,z_k)\in \widetilde{\Omega}:z_1\cdots z_k=0\}.$
	\end{proposition}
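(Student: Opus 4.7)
I parametrize $M$ by $h := \Phi \circ g : \widetilde\Omega \to \mathbb{C}^{2n}$, so
$$h(z) \;=\; \bigl(z_1,\ldots,z_k,\,0,\ldots,0,\,F_1(g(z)),\ldots,F_n(g(z))\bigr),$$
and recall that $M$ has a complex tangent at $h(z)$ precisely when the $2k$ vectors $\partial h/\partial z_l$ and $\partial h/\partial \bar z_l$ ($l=1,\ldots,k$) fail to be $\mathbb{C}$-linearly independent in $\mathbb{C}^{2n}$. The $2n\times 2k$ complex Jacobian $J$ of $h$ has $I_k$ in its top-left $k\times k$ block (from $z_l\mapsto z_l$) and rows $k+1,\ldots,n$ identically zero. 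A short linear-algebra argument using this block structure yields
$$\operatorname{rank}_{\mathbb{C}}(J) \;=\; k + \operatorname{rank}(B_D),$$
where $B_D$ is the $n\times k$ matrix whose $(j,l)$-entry equals $\delta_{jl}\,m_j\bar z_j^{m_j-1} + \partial R_j/\partial \bar z_l\bigl|_{g(z)}$ (with $\delta_{jl}=0$ for $j>k$). Thus the problem reduces to showing that $B_D$ has column rank $k$ if and only if $z_1\cdots z_k\ne 0$.

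The key input from (\ref{E:hypo}) is a weighted operator bound on $\partial R/\partial \bar z$, obtained by a phase-rotation trick applied to the linearized inequality
$$\bigl\|A_w v + B_w \bar v\bigr\| \;\le\; c\,\|\Lambda_w v\|, \qquad v\in\mathbb{C}^n,\;w\in\Omega,$$
where I abbreviate $A_w := \partial R/\partial z|_w$, $B_w := \partial R/\partial \bar z|_w$, and $\Lambda_w := \operatorname{diag}(m_j w_j^{m_j-1})$. Replacing $v$ by $e^{i\theta}v$ and maximizing over $\theta\in[0,2\pi)$ gives
$$\|A_w v\|^2 + \|B_w\bar v\|^2 + 2|\langle A_w v,\,B_w\bar v\rangle| \;\le\; c^2\,\|\Lambda_w v\|^2,$$
from which, dropping the cross term and substituting $v\mapsto \bar v$ (using $\|\Lambda_w\bar v\| = \|\Lambda_w v\|$ because $\Lambda_w$ is diagonal), one obtains the clean estimate
$$\|B_w u\| \;\le\; c\,\|\Lambda_w u\|\qquad\forall\,u\in\mathbb{C}^n. \qquad(\star)$$
Plugging $u = e_l$ into $(\star)$ also recovers the per-entry bound $|\partial R_j/\partial \bar z_l|_w| \le c\,m_l|w_l|^{m_l-1}$.

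Both directions of the proposition now follow. First, if $z_{l_0}=0$ for some $l_0\in\{1,\ldots,k\}$ (the situation of interest being $m_{l_0}\ge 2$, which produces a non-isolated CR-singularity), the per-entry bound forces every entry in the $l_0$-th column of $B_D$ to vanish, so $\operatorname{rank}(B_D)<k$ and $M$ has a complex tangent at $h(z)$. Conversely, assume $z_1\cdots z_k\ne 0$ and $B_Du = 0$ for some $u\in \mathbb{C}^k$; padding $\tilde u := (u,0)\in \mathbb{C}^n$ recasts the equation as $(\bar\Lambda_{g(z)} + B_{g(z)})\tilde u = 0$, whence $\|B_{g(z)}\tilde u\| = \|\bar\Lambda_{g(z)}\tilde u\| = \|\Lambda_{g(z)}\tilde u\|$. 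Combining with $(\star)$ gives $(1-c)\|\Lambda_{g(z)}\tilde u\| \le 0$, so $\Lambda_{g(z)}\tilde u = 0$; since all $z_l\ne 0$, the restriction of $\Lambda_{g(z)}$ to the first $k$ coordinates is invertible, forcing $u=0$ and hence $\operatorname{rank}(B_D)=k$. The main obstacle is the derivation of $(\star)$: isolating a sharp bound on $B_w$ alone (rather than on the combined $\mathbb{R}$-linear operator $v\mapsto A_wv + B_w\bar v$) essentially requires the phase-rotation trick used above, since a naive entrywise approach produces only $O(c\sqrt{nk})$-type estimates that would fail to close the perturbation argument once $c$ is close to $1$.
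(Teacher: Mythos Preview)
Your proof is correct and follows essentially the same strategy as the paper's: both reduce the complex-tangent condition to the (non-)injectivity of the $n\times k$ matrix $A(z')+R_{\bar z}(z')$ (your $B_D$), derive the key operator bound $\|R_{\bar z}(z')\bar\vartheta\|\le c\|\Lambda_{z'}\vartheta\|$ (your $(\star)$) from the Lipschitz hypothesis via a first-order Taylor expansion, and then use this bound to run the two implications. The only notable difference is cosmetic: you obtain $(\star)$ by rotating $v\mapsto e^{i\theta}v$ through all $\theta\in[0,2\pi)$ and maximizing, whereas the paper uses just the two substitutions $\vartheta$ and $i\vartheta$ together with the triangle inequality applied to the sum and difference, which already gives the same sharp constant $c$. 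So your closing remark that the phase-rotation trick is ``essentially required'' is a slight overstatement; the paper's two-phase version is enough. One further point you handle more carefully than the paper: in the direction $z_{l_0}=0\Rightarrow$ complex tangent, you explicitly flag the assumption $m_{l_0}\ge 2$ needed for the $l_0$-th column of $B_D$ to vanish, a hypothesis the paper uses tacitly.
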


	\begin{proof}
		First, we assume that $M$ has complex tangent at some point $z\in \widetilde{\Omega}$. Take $w=\left(\Phi\circ g\right)(z).$
		Let $T_wM$ be the tangent space to $M$ at $w$ viewed as a real-linear subspace of $\cplx^{2n}.$ Since $\Phi\circ g$ is an embedding of $\widetilde{\Omega},$ we have 
		\begin{align*}
			T_wM=\left\{d(\Phi\circ g)|_z(v):v\in \cplx^k\right\}.	
		\end{align*}
		\noindent Since $T_wM$ contains non-trivial complex subspace, then there exists a non-zero vector $\eta\in T_wM$ such that $i\eta$ also belongs to $T_wM.$ We take $\eta=d(\Phi\circ g)|_{z}(v^{0}),$ for some $v^{0}\in\cplx^k\setminus\{0\}$ and $i\eta=d(\Phi\circ g)|_{z}(\omega^{0}),$ for some $\omega^{0}\in\cplx^k\setminus\{0\}.$	
		Therefore, we obtain that
		\begin{align}\label{E:dif_phipi}
			d(\Phi\circ g)|_{z}(\omega^{0})=i d(\Phi\circ g)|_{z}(v^{0}).
		\end{align}
		We denote $z':=(z_1,\cdots,z_k,0,\cdots,0)\in\cplx^n \text{ for }z=(z_1,\cdots,z_k)\in\cplx^k.$
		\begin{align*}
			(\Phi\circ g)(z_1,\cdots,z_k)=(z_1,\cdots,z_k,0,\cdots,0,\bar{z_1}^{m_{1}} +R_{1}(z'),\cdots , \bar{z_{k}}^{m_{k}} + R_{k}(z'),R_{k+1}(z'),\cdots,R_{n}(z')).
		\end{align*}
		
		\noindent The matrix representation of $d(\Phi\circ g)|_{z}$ is
		$$
		\begin{pmatrix}
			\begin{matrix}
				I_{k}  \\ 
			\end{matrix}
			& \vline & 0_{k\times k} \\
			\hline
			\begin{matrix}
				0_{(n-k)\times k} 
			\end{matrix}
			& \vline & 0_{(n-k)\times k} \\
			\hline
			\begin{matrix}
				R_{z}(z')\\
			\end{matrix} & \vline &
			\begin{matrix}
				A(z')+ R_{\bar{z}}(z') \\
			\end{matrix}
		\end{pmatrix}_{2n\times2k},
		$$
		
		\noindent Where\\
		$$	R_{z}(z'):=	
		\begin{pmatrix} 
			\frac{\partial R_{1}}{\partial{z_{1}}}(z') & \frac{\partial R_{1}}{\partial{z_{2}}}(z')&\cdots&  \frac{\partial R_{1}}{\partial{z_{k}}}(z') \\[1.5ex]
			\frac{\partial R_{2}}{\partial{z_{1}}}(z')&  \frac{\partial R_{2}}{\partial{z_{2}}}(z')&\cdots&  \frac{\partial R_{2}}{\partial{z_{k}}}(z')\\[1.5ex]
			
			\vdots &\vdots  &\cdots&\vdots\\
			
			\frac{\partial R_{n}}{\partial{z_{1}}}(z') &  \frac{\partial R_{n}}{\partial{z_{2}}}(z')&\cdots & \frac{\partial R_{n}}{\partial{z_{k}}}(z')& \\\\
			
		\end{pmatrix}_{n\times k},~~ 
		R_{\bar{z}}(z'):=	
		\begin{pmatrix} 
			\frac{\partial R_{1}}{\partial\bar{{z_{1}}}}(z')& \frac{\partial R_{1}}{\partial\bar{z_{2}}}(z')&\cdots&  \frac{\partial R_{1}}{\partial\bar{{z_{k}}}}(z')\\[1.5ex]

			\frac{\partial R_{2}}{\partial\bar{{z_{1}}}}(z')& \frac{\partial R_{2}}{\partial\bar{z_{2}}}(z')&\cdots&  \frac{\partial R_{2}}{\partial\bar{{z_{k}}}}(z')\\[1.5ex]
			
			\vdots &\vdots& \cdots&\vdots   \\
			
			\frac{\partial R_{n}}{\partial\bar{{z_{1}}}}(z')& \frac{\partial R_{n}}{\partial\bar{z_{2}}}(z')&\cdots&  \frac{\partial R_{n}}{\partial\bar{{z_{k}}}}(z')\\\\
			
		\end{pmatrix}_{n\times k},
		$$	
		and
		\begin{align}\label{E:Matrix A}
			A(z')=	
			\begin{pmatrix} 
				m_{1}\bar{z_1}^{m_{1}-1} & 0 & 0 &\cdots & 0 \\[1.5ex]
				0 &	m_{2}\bar{z_2}^{m_{2}-1} & 0 &\cdots & 0\\[1.5ex]
				\vdots & \vdots   & \vdots &   &\vdots\\
				0 & 0 & 0 & \cdots & m_{k}\bar{z_k}^{m_{k}-1}\\[1.5ex]
				0 & 0 & 0 & \cdots & 0\\[1.5ex]
				\vdots & \vdots   & \vdots &   &\vdots\\
				0 & 0 & 0 &\cdots & 0 \\
			\end{pmatrix}_{n\times k},
		\end{align}	
		For any vector $\beta$ in $\mathbb{C}^k,$
		$$d(\Phi\circ g)|_{z}(\beta)=
		\begin{pmatrix}
			\begin{matrix}
				I_{k}  \\ 
			\end{matrix}
			& \vline & 0_{k\times k} \\
			\hline
			\begin{matrix}
				0_{(n-k)\times k} 
			\end{matrix}
			& \vline & 0_{(n-k)\times k} \\
			\hline
			\begin{matrix}
				R_{z}(z')\\
			\end{matrix} & \vline &
			\begin{matrix}
				A(z')+ R_{\bar{z}}(z') \\
			\end{matrix}
		\end{pmatrix}_{2n\times 2k}
		\begin{pmatrix}
			\beta\\\overline{\beta}
		\end{pmatrix}.
		$$
		\noindent This implies
		$$d(\Phi\circ g)|_{z}(\beta)=\left(\beta,0,\cdots,0,R_{z}(z')\beta+R_{\bar{z}}({z'})\bar{\beta}+A(z')\bar{\beta}\right).$$
		\noindent Hence, (\ref{E:dif_phipi}) gives us that
		$$(\omega^{0},0,\cdots,0,R_{z}(z')\omega^{0}+R_{\bar{z}}(z')\bar{\omega^{0}}+A(z')\bar{\omega^{0}})=i(v^{0},0,\cdots,0,R_{z}(z')v^{0}+R_{\bar{z}}(z')\bar{v^{0}}+A({z'})\bar{v^{0}}).$$
		\noindent It follows that $\omega^{0}=iv^{0}$ and 
		$$(R_{z}(z')\omega^{0}+R_{\bar{z}}(z')\bar{\omega^{0}}+A(z')\bar{\omega^{0}})=i(R_{z}(z')v^{0}+R_{\bar{z}}(z')\bar{v^{0}}+A({z'})\bar{v^{0}}).$$
		Putting $\omega^{0}=iv^{0}$ in the above equation, we obtain that

		\begin{align}\label{E:rltn R_A}
			R_{\bar{z}}(z')\bar{v^{0}}+A(z')\bar{v^{0}}=0.
		\end{align}
		By Taylor's formula, for $z=(z_1,\cdots,z_k
		)\in \widetilde{\Omega},~\vartheta=(\vartheta_1,\cdots,\vartheta_{k})\in \mathbb{C}^k, \text{ and } \epsilon$ real,
		
		\begin{align}\label{E:Tylr_frmla}
			&(R\circ g)(z+\eps \vartheta)-(R\circ g)(z)\notag
			=\big((R_1\circ g)(z+\eps \vartheta)-(R\circ g)(z),\cdots,(R\circ g)(z+\eps \vartheta)-(R_n\circ g)(z)\big)\\\notag
			&=\bigg(\sum_{j=1}^{k}\bigg[\delbydel{(R_1\circ g)(z)}{z_j}(\eps\vartheta_j)+\delbydel{(R_1\circ g)(z)}{\bar {z_j}}(\eps\bar{\vartheta_j})\bigg]+o(\eps),\cdots,\sum_{j=1}^{k}\bigg[\delbydel{(R_n\circ g)(z)}{z_j}(\eps\vartheta_j)+\\\notag
			&\delbydel{(R_n\circ g)(z)}{\bar {z_j}}(\eps\bar{\vartheta_j})\bigg]+o(\eps)\bigg)\\\notag
			&=\left(\sum_{j=1}^{k}\bigg[\delbydel{R_1(z')}{z_j}(\eps\vartheta_j)+\delbydel{R_1(z')}{\bar {z_j}}(\eps\bar{\vartheta_j})\bigg]+o(\eps),\cdots,\sum_{j=1}^{k}\bigg[\delbydel{R_n(z')}{z_j}(\eps\vartheta_j)+\delbydel{R_n(z')}{\bar {z_j}}(\eps\bar{\vartheta_j})\bigg]+o(\eps)\right)\\
			&=R_{z}(z')\epsilon\vartheta+R_{\bar{z}}(z')\epsilon\bar{\vartheta}+o(\epsilon),~~\text{ where } z'=(z,0)\in \Omega.\\\notag
		\end{align}
		
		\noindent Applying (\ref{E:hypo}), we have, from (\ref{E:Tylr_frmla}), that		
		\allowdisplaybreaks	
		\begin{align*}
			|R_{z}(z')\epsilon\vartheta+R_{\bar{z}}(z')\epsilon\bar{\vartheta}+o(\epsilon)|
			&\le c \left(\sum_{j=1}^k \left|(z_j+\epsilon\vartheta_j)^{m_{j}}-z_j^{m_{j}}\right|^2\right)^{\frac{1}{2}}\\[1.5ex]
			&\le c \left(\sum_{j=1}^k |\epsilon\vartheta_j|^2 \left(\sum_{l=0}^{m_{j}-1} \left|z_j+\epsilon\vartheta_j\right |^{l} \left|z_j\right|^{m_{j}-l-1}\right)^{2} \right)^{\frac{1}{2}}.
		\end{align*}
		Taking $\epsilon\to 0,$ we get 
		\allowdisplaybreaks
		\begin{align*}
			\left |R_{z}(z')\vartheta+R_{\bar{z}}(z')\bar{\vartheta}\right |&\le c \left(\sum_{j=1}^k |\vartheta_j|^2 \left(\sum_{l=0}^{m_{j}-1} \left|z_{j}\right |^{l} \left|z_j\right|^{m_{j}-l-1}\right)^{2} \right)^{\frac{1}{2}},
		\end{align*}
		i.e.
		\begin{align}\label{E:R+bar(R)}
			\left |R_{z}(z')\vartheta+R_{\bar{z}}(z')\bar{\vartheta}\right |	&\le c \left(\sum_{j=1}^k  \left(m_{j}|\vartheta_j| \left|z_j\right|^{m_{j}-1}\right)^{2} \right)^{\frac{1}{2}} \text{ for all }\vartheta\in \mathbb{C}^k.
		\end{align} 
		
		\noindent Since (\ref{E:R+bar(R)}) is true for all $\vartheta\in\cplx^n,$ we now replace $\vartheta$ by $i\vartheta$ to get
		\begin{align}\label{E:R-bar(R)}
			\left |R_{z}(z')\vartheta-R_{\bar{z}}(z')\bar{\vartheta}\right |	&\le c \left(\sum_{j=1}^k  \left(m_{j}|\vartheta_j| \left|z_j\right|^{m_{j}-1}\right)^{2} \right)^{\frac{1}{2}} \text{ for all }\vartheta\in \mathbb{C}^k.
		\end{align} 
		\noindent In view of (\ref{E:R+bar(R)}) and (\ref{E:R-bar(R)}), we get that
		\begin{align}\label{E:norm_R bar}
			\left |R_{\bar{z}}(z')\bar{\vartheta}\right |	&\le c \left(\sum_{j=1}^k \left(m_{j}|\vartheta_j|\left|z_j\right|^{m_{j}-1}\right)^{2} \right)^{\frac{1}{2}} \text{ for all }\vartheta\in \mathbb{C}^k.
		\end{align}
		
		\noindent Therefore, from (\ref{E:rltn R_A}) and (\ref{E:norm_R bar}), we get that
		\begin{align}\label{E:norm_A}
			\left |A(z')\overline{v^{0}}\right |	&\le c \left(\sum_{j=1}^k  \left(m_{j}|v^{0}_j|\left|z_j\right|^{m_{j}-1}\right)^{2} \right)^{\frac{1}{2}}, \text{ where }  v^{0}=(v^{0}_1,v^{0}_{2},\cdots,v^{0}_k)\in \mathbb{C}^k\setminus\{0\}.
		\end{align}
		\noindent This implies
		\begin{align}\label{E:finl_estmt_nrm A}
			\left(\sum_{j=1}^k  \left(m_{j}|v^{0}_j| \left|z_j\right|^{m_{j}-1}\right)^{2} \right)^{\frac{1}{2}}	&\le c \left(\sum_{j=1}^k \left(m_{j}|v^{0}_j| \left|z_j\right|^{m_{j}-1}\right)^{2} \right)^{\frac{1}{2}}.
		\end{align}

		Now, if $\left(\sum_{j=1}^k  \left(m_{j}|v^{0}_j| \left|z_j\right|^{m_{j}-1}\right)^{2} \right)^{\frac{1}{2}}\ne 0,$ then we obtain that $c\geq 1,$ which is a contradiction to our assumption. Hence, $\left(\sum_{j=1}^k  \left(m_{j}|v^{0}_j| \left|z_j\right|^{m_{j}-1}\right)^{2} \right)^{\frac{1}{2}}= 0.$ Again, viewing $A(z')$ as a $\cplx$-linear map from $\cplx^k$ to $\cplx^n$ with $v^{0}\in\ker A(z'),$ from (\ref{E:Matrix A}) we get that $\rank A(z')<k$ and hence $z_1 z_2\cdots z_{k}=0.$\\
		\smallskip
		\par Conversely, assume that  $p\in  \left\{z=(z_1,z_2,\cdots,z_k)\in \widetilde{\Omega}:z_1 z_2 \cdots z_k=0\right\}.$ We need to show that $M$ has complex tangent at $(\Phi\circ g)(p).$ Without loss of generality, we take $p=(p_1,p_2,\cdots,p_{k-1},0).$ Suppose $M$ does not have complex tangent at $(\Phi\circ g)(p)$. Thus, we have $(A(p')+ R_{\bar{z}}(p'))v\ne 0~~\forall v\in \cplx^k\setminus\{0\},$ where $p'=(p,0)\in \Omega.$ Choose $v=(0,0,\cdots,0,1)\in\cplx^k$. Since $A(p')v=0,$ we have $R_{\bar{z}}(p')v\ne 0.$ Using (\ref{E:norm_R bar}), we get that $|R_{\bar{z}}(p')v|= 0.$ This is a contradiction. Therefore, $M$ has a complex tangent at $(\Phi\circ g)(p).$
	\end{proof}
	\smallskip
	
	Before going to the next section, we state and prove a lemma that will be useful in the proof of \Cref{T:Main Result1}.
	
	\begin{lemma}\label{L:Argumnt}	
		\sloppy Assume that $m_{j}\in \mathbb{N} \text{ and }\alpha_{j}\in \mathbb{N}\cup \{0\},j=1,\cdots ,2n,$ with $\gcd\left(m_i,m_j\right)=1$ for ${i\ne j},$ and $\gcd(m_i,\alpha_{j})=1 ~~\forall\alpha_{j}\ne 0.$ Let $(t_{1},t_{2},\cdots,t_{2n}) ,(t'_{1},t'_{2},\cdots,t'_{2n})\in \left\{0,1,\cdots,m_1-1\right\}\times \cdots\times \left\{0,1,\cdots,m_{2n}-1\right\}.$ Assume that $\{i_1,\cdots i_k\}\subset \{1,\cdots,2n\}$ and if there exist $j_{0}\in \{1,\cdots,2n\}\setminus\{i_1,\cdots i_k\}$ such that $t_{j_{0}}\ne t'_{j_0},$ and $\alpha_{j_{0}}\ne 0,$ then 	
		\begin{align*}
			{{\sum^{2n}_{\substack{j=1\\
							j\notin\{i_1,\cdots,i_{k}\}	\\
					}}
					\frac{t_j\alpha_{j}}{m_j}}}\ne 
			{{\sum^{2n}_{\substack{j=1\\
							j\notin\{i_1,\cdots,i_{k}\}	\\
					}}
					\frac{t'_j\alpha_{j}}{m_j}}}.
		\end{align*}
	\end{lemma}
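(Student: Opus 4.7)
\medskip

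\noindent\textbf{Proof plan for \Cref{L:Argumnt}.} The plan is to argue by contradiction, clear denominators, and reduce modulo $m_{j_0}$, using the pairwise coprimality hypotheses to pick out a single term.

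Suppose the two sums are equal; writing $J:=\{1,\ldots,2n\}\setminus\{i_1,\ldots,i_k\}$, this means
\begin{equation*}
\sum_{j\in J}\frac{(t_j-t'_j)\alpha_j}{m_j}=0.
\end{equation*}
I multiply through by $M:=\prod_{l\in J}m_l$ to land in the integers, obtaining
\begin{equation*}
\sum_{j\in J}(t_j-t'_j)\,\alpha_j\!\!\prod_{\substack{l\in J\\ l\ne j}}\!\! m_l \;=\;0.
\end{equation*}

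Next I reduce this identity modulo $m_{j_0}$. For every $j\in J$ with $j\ne j_0$, the product $\prod_{l\in J,\,l\ne j}m_l$ contains the factor $m_{j_0}$ (since $j_0\in J$ and $j_0\ne j$), so those terms vanish $\bmod\, m_{j_0}$. What remains is the congruence
\begin{equation*}
(t_{j_0}-t'_{j_0})\,\alpha_{j_0}\!\!\prod_{\substack{l\in J\\ l\ne j_0}}\!\!m_l\equiv 0\pmod{m_{j_0}}.
\end{equation*}
By hypothesis, $\alpha_{j_0}\ne 0$ and $\gcd(m_{j_0},\alpha_{j_0})=1$, and $\gcd(m_{j_0},m_l)=1$ for every $l\ne j_0$. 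Hence the coefficient $\alpha_{j_0}\prod_{l\in J,\,l\ne j_0}m_l$ is a unit modulo $m_{j_0}$, and we conclude $m_{j_0}\mid (t_{j_0}-t'_{j_0})$.

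Finally, $t_{j_0},t'_{j_0}\in\{0,1,\ldots,m_{j_0}-1\}$ force $|t_{j_0}-t'_{j_0}|<m_{j_0}$, and the standing assumption $t_{j_0}\ne t'_{j_0}$ makes this difference nonzero, contradicting $m_{j_0}\mid(t_{j_0}-t'_{j_0})$. This contradiction proves the lemma. The only genuine ingredient is the modular reduction; the main thing to get right is simply that every hypothesis ($\gcd(m_i,m_j)=1$, $\gcd(m_i,\alpha_j)=1$, $\alpha_{j_0}\ne 0$, and the range restriction on $t_{j_0},t'_{j_0}$) is used exactly once, and no obstacle beyond bookkeeping is expected.
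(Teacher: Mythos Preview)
Your proof is correct and follows essentially the same route as the paper: assume equality, clear denominators, and use the pairwise coprimality hypotheses to deduce $m_{j_0}\mid(t_{j_0}-t'_{j_0})$, contradicting the range restriction on $t_{j_0},t'_{j_0}$. The only cosmetic difference is that you phrase the isolation step as reduction modulo $m_{j_0}$ while the paper separates off the $j_0$ term first and argues via divisibility, but the content is identical.
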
	
	
	\begin{proof}
		\noindent Assume that there exist $j_{0}\in \{1,\cdots,2n\}\setminus\{i_{1},\cdots,i_{k}\}$ such that $t_{j_0}\not=t'_{j_0},$ $\alpha_{j_{0}}\not =0$ and
		
		\begin{align*}
			{{\sum^{2n}_{\substack{j=1\\
							j\notin\{i_1,\cdots,i_{k}\}	\\
					}}
					\frac{t_j\alpha_{j}}{m_j}}}=
			{{\sum^{2n}_{\substack{j=1\\
							j\notin\{i_1,\cdots,i_{k}\}	\\
					}}
					\frac{t'_j\alpha_{j}}{m_j}}}.
		\end{align*}
		\noindent Therefore,
		\allowdisplaybreaks 
		\begin{align*}
			&\frac{\alpha_{j_{0}}}{m_{j_{0}}}\left( t_{j_0}'-t_{j_0}\right)=	{{\displaystyle\sum^{2n}_{\substack{j=1\\
							j\notin\{i_1,\cdots,i_{k},j_{0}\}	\\
					}}
					\frac{\alpha_{j}}{m_j}\bigg(t_j-t'_{j}}}\bigg)\\	
			&=\frac{{{\displaystyle\sum_{\substack{
								j\notin\{i_1,\cdots,i_{k},j_{0}\}	\\
						}}
						\bigg(\alpha_{j}m_1m_2\cdots\widehat{m_j}\cdots m_{n}(t_j-t'_j)}}\bigg)} {\displaystyle	\prod_{\substack{
						j\notin\{i_1,\cdots,i_{k},j_{0}\}	\\
				}}
				m_j},
		\end{align*}
		where $\widehat{m_j}$ denote the absence of $j^{th}$ term.			
		\noindent This implies 
		\begin{align*}
			m_{j_{0}}	\bigg({{\sum_{\substack{
							j\notin\{i_1,\cdots,i_{k},j_{0}\}	\\
					}}
					\alpha_{j}m_1m_2\cdots\widehat{m_j}\cdots m_{n}(t_j-t'_j)}}\bigg)=\alpha_{j_{0}}(t'_{j_0}-t_{j_{0}})	\prod_{\substack{
					j\notin\{i_1,\cdots,i_{k},j_{0}\}	\\
			}}
			m_j,
		\end{align*}
		\noindent From above, we can say that $m_{j_{0}} $ divides $ \displaystyle\alpha_{j_{0}}(t'_{j_0}-t_{j_{0}}) \prod_{\substack{
				j\notin\{i_1,\cdots,i_{k},j_{0}\}\\
		}}m_j.$ Since $\gcd(m_{i},m_{j})=1$ for $i\ne j$ and $\gcd(m_i,\alpha_{j})=1,$ we get that
		$m_{j_{0}} $ divides $(t'_{j_0}-t_{j_{0}}).$ This is not possible because $0\le t_{j_0},t'_{j_{0}}<m_{j_{0}}.$
		
	\end{proof}

	\par In \cite{Bha11}, Bharali introduced a technique to study the polynomial convexity of graphs of functions in one variable. We observe here that the same technique can be generalized to higher-dimension. We will now state and prove this result. For that, we need some notations. 
	\begin{itemize}
		\item For a compact set $K\subset\cplx^n,$ by $\hol(K;\cplx^{N})$ we define the set of holomorphic functions from some neighborhood of $K$ to $\cplx^N.$ If $N=1,$ we simply denote it $\hol(K);$ And for fix $\xi\in K,$  we define a sub-class of $\hol(K;\cplx^{N})$ by $\hol_{\xi}(K;\cplx^{N}):=\left\{F\in\hol(K;\cplx^{N}):F(\xi)=0\right\}.$
		\item For $A\in\hol(K) \text{ and } F=(f_1,\cdots,f_N)\in \hol(K;\cplx^{N})$ by $A(z)F(z)$, we mean $\left(A(z)f_1(z),\cdots,\right.$ $\left. A(z)f_{N}(z)\right)$ and by $P(z)Q(z),$ we mean $\sum_{j=1}^{N}p_j(z)q_j(z)$ for $P=(p_1,\cdots,p_N)$ and $Q=(q_1,\cdots
		,q_N);$
		\item Let $K$ be a subset of $\cplx^n$ and $F:=(f_1,f_2,\cdots,f_N):K\to\cplx^N$ be a function. We will denote the graph of $F$ over $K$ by $Gr_{K}F$ or by $Gr_{K}(f_1,f_2,\cdots,f_N).$
	\end{itemize}

	\noindent \begin{proposition}\label{P:poly cnvx1}
		Let $N\ge n\ge 1$ and $F\in\smoo(\overline{\Omega};\cplx^{N}),$ where $\Omega$ is a bounded domain in $\cplx^n$ with $\overline{\Omega}$ is polynomially convex, and let $\xi\in\overline{\Omega}.$ Suppose there exist a constant $p\ge 2,$ a nowhere vanishing function $A\in \hol(\overline{\Omega}),$ and mappings $G,H\in\hol_{\xi}(\overline{\Omega};\cplx^{N})$ such that 
		\begin{align}\label{Eq:Poly cnvx1}
			\left|A(z)F(z)-A(\xi)F(\xi)+G(z)\right|^p\le \rl\left(H(z)\left(A(z)F(z)-A(\xi)F(\xi)+G(z)\right)\right)~~\forall z\in \overline{\Omega}.
		\end{align}
		Then $Gr_{\overline{\Omega}}(F)$ is polynomially convex. 
	\end{proposition}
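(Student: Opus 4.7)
The plan is to extend Bharali's one-variable technique in \cite{Bha11} to the higher-dimensional vector-valued setting by constructing a single plurisubharmonic majorant of $K:=Gr_{\overline{\Omega}}(F)$ and then invoking H\"{o}rmander's theorem. Set
\[
v(z,w):=A(z)w-A(\xi)F(\xi)+G(z),
\]
a $\cplx^N$-valued holomorphic map on $U\times\cplx^N$, where $U$ is a pseudoconvex neighborhood of $\overline{\Omega}$ on which $A$ is nowhere vanishing and $G,H$ are holomorphic. Define
\[
\Psi(z,w):=|v(z,w)|^p-\rl\bigl(H(z)\cdot v(z,w)\bigr).
\]
Since $|v|^2=\sum_{j=1}^N|v_j|^2$ is plurisubharmonic (as a sum of squared moduli of holomorphic functions in $(z,w)$) and $t\mapsto t^{p/2}$ is convex and non-decreasing on $[0,\infty)$ for $p\ge 2$, the term $|v|^p$ is plurisubharmonic. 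The term $\rl(H\cdot v)$ is the real part of a holomorphic function in $(z,w)$, hence pluriharmonic. So $\Psi$ is plurisubharmonic on $U\times\cplx^N$, and the hypothesis \eqref{Eq:Poly cnvx1} is precisely the statement that $\Psi(z,F(z))\le 0$ for every $z\in\overline{\Omega}$, i.e.\ $\Psi\le 0$ on $K$.

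Next, pick a bounded pseudoconvex neighborhood $\mathcal{U}:=U\times W$ of $K$ with $W$ a large open ball in $\cplx^N$. Since $\overline{\Omega}$ is polynomially convex, a standard projection argument (testing against polynomials in $z$ alone) places $\hull{K}$, the polynomial convex hull in $\cplx^{n+N}$, inside $\overline{\Omega}\times W\subset\mathcal{U}$. H\"{o}rmander's theorem (\Cref{R:hormander}) then yields $\hull{K}\subset\hull{K}_{\mathcal{U}}=\hull{K}_{\mathcal{U}}^P$, and therefore $\Psi\le 0$ on $\hull{K}$.

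To conclude, take $(z_0,w_0)\in\hull{K}$. Polynomial convexity of $\overline{\Omega}$ forces $z_0\in\overline{\Omega}$, and since $A(z_0)\ne 0$ the map $w\mapsto v(z_0,w)$ is an affine isomorphism, so proving $w_0=F(z_0)$ reduces to proving $v(z_0,w_0)=v(z_0,F(z_0))$. Both vectors lie in the convex ``petal'' $\mathcal{R}_{z_0}:=\{u\in\cplx^N:|u|^p\le\rl(H(z_0)\cdot u)\}$, convex because $p\ge 2$ makes $t\mapsto t^p$ convex on $[0,\infty)$. The base-point conditions $G(\xi)=H(\xi)=0$ give the anchor $v(\xi,F(\xi))=0$, so that $\Psi(\xi,F(\xi))=0$. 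The main obstacle is to pin $v(z_0,w_0)$ to the single value $v(z_0,F(z_0))$ inside $\mathcal{R}_{z_0}$, since the petal is not a singleton when $H(z_0)\ne 0$. This is done by supplementing $\Psi$ with the additional plurisubharmonic functions $(z,w)\mapsto\log|\beta\cdot v(z,w)|$ for $\beta\in\cplx^N$, which, via H\"{o}rmander, confine $v(z_0,w_0)$ to the polynomial hull of the curve $\{v(z,F(z)):z\in\overline{\Omega}\}\subset\cplx^N$, and by exploiting the vanishing of $H$ at $\xi$ together with the nowhere-vanishing hypothesis on $A$, mirroring the one-variable reasoning of \cite{Bha11}.
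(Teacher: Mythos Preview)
Your construction of $\Psi$ and the use of H\"{o}rmander's theorem to obtain $\Psi\le 0$ on $\hull{K}$ are correct and match the paper exactly. The gap is in the final paragraph: you fix a single $\xi$ and then try to force $w_0=F(z_0)$ for an \emph{arbitrary} $z_0\in\overline{\Omega}$. As you yourself observe, the petal $\mathcal{R}_{z_0}=\{u:|u|^p\le\rl(H(z_0)\cdot u)\}$ is not a singleton when $H(z_0)\ne 0$, so $\Psi\le 0$ alone cannot pin down $w_0$ over such $z_0$. Your proposed remedy---adding the plurisubharmonic functions $\log|\beta\cdot v|$---only yields $|\beta\cdot v(z_0,w_0)|\le\sup_{z\in\overline{\Omega}}|\beta\cdot v(z,F(z))|$ for each $\beta$, which places $v(z_0,w_0)$ in a large convex region in $\cplx^N$ and does nothing to single out the value $v(z_0,F(z_0))$. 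The closing phrase ``mirroring the one-variable reasoning of \cite{Bha11}'' is not an argument.

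The point you have missed is that the hypothesis is to be read as holding for \emph{every} $\xi\in\overline{\Omega}$ (with $A,G,H$ allowed to depend on $\xi$), and the paper's proof uses this. For a given $(z_0,w_0)\in\hull{K}$, choose $\xi=z_0$ and apply your $\Psi$ with that choice. Then $H(z_0)=H(\xi)=0$ and $G(z_0)=G(\xi)=0$, so the inequality $\Psi(z_0,w_0)\le 0$ becomes $|A(\xi)(w_0-F(\xi))|^p\le 0$, forcing $w_0=F(\xi)=F(z_0)$ since $A(\xi)\ne 0$. In other words, each choice of $\xi$ controls only the fibre $\hull{K}\cap(\{\xi\}\times\cplx^N)$, and one sweeps $\xi$ over $\overline{\Omega}$ to conclude $\hull{K}=K$. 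No auxiliary functions beyond $\Psi_\xi$ are needed.
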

	
	\begin{proof}
		Since $A\in \hol(\overline{\Omega})$ and $G,H\in\hol_{\xi}(\overline{\Omega};\cplx^{N}),$ there exists a neighborhood $U$ of $\overline{\Omega}$ such that the functions $A,G,H$ are holomorphic on $U.$ Since $\overline{\Omega}$ is polynomially convex, there exist an open polynomial polyhedron $\triangle$ such that $\overline{\Omega}\subset \triangle\subset U.$ Define a map $\Psi:\triangle\times \mathbb{C}^N\to\mathbb{R}$ by 
		\begin{align*}
			\Psi(z,w):=\left|A(z)w-A(\xi)F(\xi)+G(z)\right|^p
			-\rl\left(H(z)\left(A(z)w-A(\xi)F(\xi)+G(z)\right)\right).
		\end{align*}
		\noindent Since $\rl\left(H(z)\left(A(z)w-A(\xi)F(\xi)+G(z)\right)\right)$ is pluriharmonic in $z$ and $w$ on $\triangle\times\mathbb{C}^N,$ $\Psi$ is plurisubharmonic on $\triangle\times\mathbb{C}^N.$ We set $\Gamma:=Gr_{\overline{\Omega}}(F)$. Since $\overline{\Omega}$ is polynomially convex, we can say that $\widehat{\Gamma}\Subset {\overline{\Omega}}\times\mathbb{C}^N.$ 
		\noindent We consider the set
		\begin{align*}
			\mathcal{S}:=\big\{(z,w)\in {\overline{\Omega}}\times \mathbb{C}^{N} & :\Psi(z,w)\le 0\big\}.
		\end{align*}
		We now claim that $\widehat{\Gamma}\subset\mathcal{S}.$ To prove this claim assume $(z_0,w_0)\in \widehat{\Gamma}$ but $(z_0,w_0)\notin \mathcal{S}.$ Therefore, $	\Psi(z_0,w_0)>0$ and by assumption (\ref{Eq:Poly cnvx1}), we get that $\sup_{\Gamma}\Psi(z,w)\le 0.$ Since $\triangle$ is an open Runge and Stein neighborhood of $\overline{\Omega}$ with $\Psi\in {\sf psh}(\triangle\times \cplx^{N})$ such that $\Psi(z_0,w_0)>\sup_{\Gamma}\Psi(z,w),$ we get that $(z_0,w_0)\notin \widehat{\Gamma}_{\mathcal{O}(\triangle\times \cplx^{N})}$ and so $(z_0,w_0)\notin \widehat{\Gamma}.$ Therefore, we obtain that $\widehat{\Gamma}\subset\mathcal{S}.$ This also implies that  
		\begin{align*}
			\left|A(z)w-A(\xi)F(\xi)+G(z)\right|^p
			&\le|H(z)|\left|A(z)w-A(\xi)F(\xi)+G(z)\right|~~\forall (z,w)\in\widehat{\Gamma}.
		\end{align*} 
		
		\noindent Since $p\ge 2$, we obtain that
		\begin{align}\label{Set:gamma_hat}
			\widehat{\Gamma}\subset\left\{(z,w)\in \overline{\Omega}\times \mathbb{C}^{N}:\left|A(z)w-A(\xi)F(\xi)+G(z)\right|
			\le |H(z)|^{\frac{1}{p-1}}\right\}.
		\end{align}
		\par Our next claim is: $\widehat{\Gamma}\cap\left(\{\xi\}\times\mathbb{C}^N\right)=\{(\xi,F(\xi))\}.$
		To prove this claim, assume $(\alpha,\beta)\in \widehat{\Gamma}\cap\left(\{\xi\}\times\mathbb{C}^N\right).$ This implies $\alpha=\xi.$ Since $(\xi,\beta)\in \widehat{\Gamma},$ we get from (\ref{Set:gamma_hat}) that
		\begin{align*}	
			\left|A(\xi)\beta-A(\xi)F(\xi)+G(\xi)\right|
			\le |H(\xi)|^{\frac{1}{p-1}}.	
		\end{align*}
		\noindent In view of the assumption $H(\xi)=0=G(\xi),$ we obtain that
		$|A(\xi)||\beta-F(\xi)|=0.$ Since $A$ is nowhere vanishing, we obtain $\beta=F(\xi).$
		This proves the claim. Since, by assumption, for each $\xi\in \overline{\Omega},$ such functions $A,G,H$ exists, therefore, we obtain that	$\widehat{\Gamma}=\widehat{\Gamma}\cap\left(\cup_{\xi\in {\overline{\Omega}}}\left(\{\xi\}\times\mathbb{C}^N\right)\right)=\cup_{\xi\in {\overline{\Omega}}}\{(\xi,F(\xi))\}=\Gamma.$
		
	\end{proof}

	\section{The proof of Theorem~\ref{T:Main Result2}}\label{S:Main Result2}
	
	First, we state and prove a lemma that is crucial to our proof.		
	\begin{lemma}\label{L:main lemma}
		Let $\Omega$ be a bounded domain in $\cplx^n$ such that $\widehat{\overline{\Omega}}=\overline{\Omega}$ and let $F_1,F_2,\cdots,F_n$ be functions on $\overline{\Omega}$ defined by $F_{j}(z)=\bar{z}^{m_j}_{j}+R_{j}(z),~~j=1,\cdots,n,$ and  $m_{j}\in \mathbb{N}.$ Assume that there exists a real number $c\in(0,1)$ such that the map $R=(R_1,R_2,\cdots,R_n)$ satisfies 
		\noindent \begin{align*}
			|R(z)-R(\xi)|\le c \left(\sum_{j=1}^n \left|z_j^{m_{j}}-\xi_j^{m_{j}}\right|^2\right)^{\frac{1}{2}}~~ \forall z,\xi \in \overline{\Omega}.
		\end{align*}
		Then $Gr_{\overline{\Omega}}(F_1,F_2,\cdots,F_n)$ is polynomially convex.
	\end{lemma}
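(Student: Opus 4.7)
My plan is to invoke Proposition~\ref{P:poly cnvx1}. Fix $\xi\in\overline{\Omega}$; I would choose the simplest possible ingredients, namely $A\equiv 1$, $G\equiv 0$, exponent $p=2$, and the polynomial $\cplx^{n}$-valued map
\[
H(z):=2\bigl(z_{1}^{m_{1}}-\xi_{1}^{m_{1}},\,\ldots,\,z_{n}^{m_{n}}-\xi_{n}^{m_{n}}\bigr).
\]
Here $A$ is nowhere vanishing on $\overline{\Omega}$, and $G,H$ are entire with $G(\xi)=H(\xi)=0$, so the class requirements ($A\in\hol(\overline{\Omega})$, $G,H\in\hol_{\xi}(\overline{\Omega};\cplx^{n})$) in Proposition~\ref{P:poly cnvx1} are automatic.

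What remains is to verify the pointwise inequality
\[
|F(z)-F(\xi)|^{2}\;\le\;\rl\!\bigl(H(z)(F(z)-F(\xi))\bigr)\qquad\forall\,z\in\overline{\Omega}.
\]
Setting $w(z):=(z_{1}^{m_{1}}-\xi_{1}^{m_{1}},\ldots,z_{n}^{m_{n}}-\xi_{n}^{m_{n}})$ and $r(z):=R(z)-R(\xi)$, one has $F(z)-F(\xi)=\overline{w(z)}+r(z)$ componentwise. A direct expansion yields
\[
|F(z)-F(\xi)|^{2}=|w|^{2}+2\rl\!\Bigl(\sum_{j=1}^{n}w_{j}r_{j}\Bigr)+|r|^{2},\quad \rl\!\bigl(H(z)(F(z)-F(\xi))\bigr)=2|w|^{2}+2\rl\!\Bigl(\sum_{j=1}^{n}w_{j}r_{j}\Bigr),
\]
whose difference is exactly $|r|^{2}-|w|^{2}$. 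The hypothesis (\ref{E:hypo}) supplies $|r|\le c|w|$, so this difference is bounded above by $(c^{2}-1)|w|^{2}\le 0$ since $c\in(0,1)$. Applying Proposition~\ref{P:poly cnvx1} then delivers the polynomial convexity of $Gr_{\overline{\Omega}}(F_{1},\ldots,F_{n})$.

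The only substantive step is the choice of $H$: it is engineered so that $\rl\bigl(\sum_{j}2w_{j}\overline{w_{j}}\bigr)=2|w|^{2}$ cancels the norm-square of the antiholomorphic piece $\overline{w}$ of $F-F(\xi)$, leaving a residue that the Lipschitz hypothesis on $R$ kills. A natural-looking variant that also includes $G(z)=w(z)$, by analogy with Wermer-type arguments, introduces cross terms not controlled by (\ref{E:hypo}) and is less clean; with $G=0$ the calculation collapses to the one-line identity above, and I expect no serious obstacle beyond identifying this choice.
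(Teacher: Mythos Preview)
Your proof is correct and follows essentially the same approach as the paper: both apply Proposition~\ref{P:poly cnvx1} with $A\equiv 1$, $G\equiv 0$, $p=2$, and $H$ a scalar multiple of $(z_{1}^{m_{1}}-\xi_{1}^{m_{1}},\ldots,z_{n}^{m_{n}}-\xi_{n}^{m_{n}})$. Your verification of the key inequality is in fact slightly cleaner than the paper's---the paper estimates $|F(z)-F(\xi)|^{2}$ and $\rl(\Psi_{\xi}(z))$ separately via Cauchy--Schwarz and Minkowski, obtaining the $c$-dependent constant $\frac{(1+c)^{2}}{1-c}$ in place of your uniform~$2$, whereas your exact expansion shows the difference is precisely $|r|^{2}-|w|^{2}$.
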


	\begin{proof}
		Fixing $\xi\in \overline{\Omega}$ we define $\Psi_{\xi}:\overline{\Omega}\to \cplx$ by
		\allowdisplaybreaks
		\begin{align*}
			\Psi_{\xi}(z)&:=\sum_{j=1}^{n}\left(z^{m_j}_j-\xi^{m_j}_{j}\right)\left( F_j(z)-F_j(\xi)\right).
		\end{align*}
		\noindent Putting $F_{j}(z)=\bar{z}^{m_j}_{j}+R_{j}(z),$ we get that	
		\begin{align}\label{E:fn_si}
			\Psi_{\xi}(z)=\sum_{j=1}^{n}\left |z^{m_{j}}_j-\xi_{j}^{m_{j}}\right |^2+\sum_{j=1}^{n}\left(z^{m_j}_j-\xi^{m_j}_{j}\right)\left( R_j(z)-R_j(\xi)\right)~~\forall z\in \overline{\Omega}.
		\end{align}
		\noindent Therefore, the real part of  $\Psi_{\xi}(z)$ is
		\begin{align*}
			\rl(\Psi_{\xi}(z))=\sum_{j=1}^{n}\left |z^{m_{j}}_j-\xi_{j}^{m_{j}}\right |^2+\rl\left(\sum_{j=1}^{n}\left(z^{m_j}_j-\xi^{m_j}_{j}\right)\left( R_j(z)-R_j(\xi)\right)\right).
		\end{align*}	
		\noindent	We now compute:
		\begin{align*}
			\rl\left(\sum_{j=1}^{n}\left(z^{m_j}_j-\xi^{m_j}_{j}\right)\left( R_j(z)-R_j(\xi)\right)\right)
			&\leq \sum_{j=1}^{n}\left|\left(z^{m_j}_j-\xi^{m_j}_{j}\right)\left( R_j(z)-R_j(\xi)\right)\right|\\
			&\leq \left(\sum_{j=1}^{n}\left|z^{m_j}_j-\xi^{m_j}_{j}\right|^2\right)^{\frac{1}{2}}|R(z)-R(\xi)|\\
			&\leq c \left(\sum_{j=1}^{n}\left|z^{m_j}_j-\xi^{m_j}_{j}\right|^2\right).
		\end{align*}
		\noindent Here we use Cauchy-Schwarz for the second inequality and the last inequality follows from (\ref{E:hypo}). From above calculations, we have
		
		\begin{align*}
			- c \left(\sum_{j=1}^{n}\left|z^{m_j}_j-\xi^{m_j}_{j}\right|^2\right)	\leq \rl\left(\sum_{j=1}^{n}\left(z^{m_j}_j-\xi^{m_j}_{j}\right)\left( R_j(z)-R_j(\xi)\right)\right)&\leq c \left(\sum_{j=1}^{n}\left|z^{m_j}_j-\xi^{m_j}_{j}\right|^2\right).
		\end{align*}
		
		Therefore, we obtain that 
		\begin{align}\label{E:rl_prt_si}
			0\leq (1-c)\left(\sum_{j=1}^{n}\left|z^{m_j}_j-\xi^{m_j}_{j}\right|^2\right)	\leq \rl(\Psi_{\xi}(z))\leq (1+c)\left(\sum_{j=1}^{n}\left|z^{m_j}_j-\xi^{m_j}_{j}\right|^2\right).
		\end{align}
		
		Let us compute:	
		\allowdisplaybreaks
		\begin{align*}
			\left|F(z)-F(\xi)\right|&=\left( \sum_{j=1}^{n}\left|\bar{z_j}^{m_j}+R_j(z)-\bar{\xi_j}^{m_j}-R_j(\xi)\right|^2\right)^{\frac{1}{2}}\\
			&\le\left(\sum_{j=1}^{n}\left(\left|\bar{z_j}^{m_j}-\bar{\xi_j}^{m_j}\right|+\left|R_j(z)-R_j(\xi)\right|\right)^2\right)^{\frac{1}{2}}\\
			&\le\left(\sum_{j=1}^{n}\left|\bar{z_j}^{m_j}-\bar{\xi_j}^{m_j}\right|^2\right)^{\frac{1}{2}}+\left(\sum_{j=1}^{n}\left|R_j(z)-R_j(\xi)\right|^2\right)^{\frac{1}{2}}.
		\end{align*}
		Here we use Minkowski inequality for the last inequality. Using (\ref{E:hypo}), we obtain from above that
		\allowdisplaybreaks
		\begin{align}\label{E:prefnl_estmt_F}
			\left|F(z)-F(\xi)\right|^2\leq (1+c)^2\sum_{j=1}^{n}\left|{z_j}^{m_j}-{\xi_j}^{m_j}\right|^2~~ \forall (z,\xi)\in \overline{\Omega}\times\overline{\Omega}.
		\end{align}
		
		\noindent In view of (\ref{E:rl_prt_si}) and (\ref{E:prefnl_estmt_F}), we get that
		\begin{align}\label{E:final estmate of F}
			\left|F(z)-F(\xi)\right|^2
			&\le \frac{(1+c)^2}{(1-c)}\rl(\Psi_{\xi}(z))~~ \forall (z,\xi)\in \overline{\Omega}\times\overline{\Omega}.
		\end{align}	
		\noindent	Therefore, we obtain that $	|F(z)-F(\xi)|^2
		\le \text{C}\rl\left(\Psi_{\xi}(z)\right),$  where  C is a constant on the right side of (\ref{E:final estmate of F}) which is independent of $z.$ Hence, by \Cref{P:poly cnvx1}, we conclude that $Gr_{\overline{\Omega}}(F)$ is polynomially convex.
	\end{proof}		
	
	\begin{proof}[Proof of Theorem~\ref{T:Main Result2}]
		In view of \Cref{L:main lemma}, we know that  $\Gamma:=Gr_{\overline{D}}(F),$ the graph of $F$ over $\overline{D}$ is polynomially convex. 
		\par We now assume that $R$ is $\smoo^1$-smooth on $\Omega.$ We wish to use \Cref{R:Samuel-Wold} for approximation. For that we require a suitable totally real stratification of $\Gamma.$
		\smallskip 
		For each subset $\{i_1,\cdots,i_k\}\subset\{1,\cdots,n\},$ we define
		
		\begin{align*}
			\sigma_{(i_1,\cdots,i_k)}(z_1,\cdots,z_{n}):=	\prod^{n}_{\substack{j=1\\
					j\notin\{i_1,\cdots,i_{k}\}	\\
			}}
			z_{j}.
		\end{align*}
		\noindent We now give a stratification of $\Omega$ as follows: 	
		\begin{align*}
			Z_{n}&:=\Omega;\\
			Z_{n-1}&:=\left\{z\in\Omega:	\prod^{n}_{\substack{j=1\\
			}}
			z_{j}=0\right\};\\
			Z_{n-2}&:=\left\{z\in\Omega: 	\sigma_{(i_1)}=0,z_{i_1}=0, 1\le i_1\le n	\right\};\\
			Z_{n-(k+1)}&:=\left\{z\in\Omega: 	\sigma_{(i_1,\cdots,i_{k})}=0,z_{i_l}=0,l=1,\cdots,k,~~ 1\le i_1\ne \cdots\ne i_{k}\le n\right\},1\le k\le n-1.
		\end{align*}
		
		\noindent Clearly,	\begin{align*}
			Z_{0}&:=\left\{z\in\Omega: 	\sigma_{(i_1,i_2,\cdots,i_{n-1})}=0, 1\le i_1\ne i_2\ne\cdots\ne i_{n-1}\le n\right\}=\{(0,\cdots,0)\}.
		\end{align*}
		We define our stratification of $Gr_{\overline{D}}(F)$ as follows:
		\begin{align*}
			X_0&:= Gr_{Z_0\cap\overline{D}}(F);\\
			X_1&:= Gr_{Z_1\cap\overline{D}}(F);\\
			\vdots\\
			X_n&:= Gr_{Z_n\cap\overline{D}}(F)=Gr_{\overline{D}}(F).
		\end{align*}
		
		We claim that $X_{k} \setminus X_{k-1}$ is totally real for each $k=1,\cdots,n.$ First we take $k=n$ and $(p,F(p))\in X_{n}\setminus X_{n-1}.$ This implies $p=(p_1,\cdots,p_{n})\in Z_{n}=\Omega.$
		By taking $\widetilde{\Omega}=\Omega$ and $g=Id$ in \Cref{P:Cplx_tngnt}, we get that $(\Phi\circ g)(\widetilde{\Omega})=Gr_{\Omega}(F)$ and $Gr_{\Omega}(F)$ is totally real at $(p,F(p))$ if and only if $p_{1}\cdots p_{n}\ne 0$ i.e $p\notin Z_{n-1}.$ Therefore, $X_{n}\setminus X_{n-1}$ is totally real. Similarly, fix $k\in \{1,\cdots,n\}$ and we take $(p,F(p))\in X_{k}\setminus X_{k-1}.$ This implies $p\in Z_{k}.$ Then there exists a set $\{i_1,\cdots,i_{k}\}\subset \{1,\cdots,n\}$ such that $p_{j}=0~~ \forall {j}\in \{i_1,\cdots,i_{k}\}.$ Without loss of generality, we can assume that that $i_{j}=n-k+j.$ Therefore, $p=(p_1,\cdots,p_{n-k},0,\cdots,0).$ Since \Cref{P:Cplx_tngnt} is true for any $k\le n,$ $Gr_{Z_{k}}(F)$ is totally real if and only if $p_1\cdots p_{n-k}\not =0$ i.e $p\notin Z_{k-1}.$ Therefore, $X_{k} \setminus X_{k-1}$ is totally real.
		
		\noindent So far, we have the following:		
		\begin{itemize}
			\item $X_{n}$ is polynomially convex;
			\item $X_{0}\subset\cdots\subset X_{n}=Gr_{\overline{D}}(F)$ with $X=\cup_{j} X_j;$
			\item $X_{j} \setminus X_{j-1}$ is totally real $\forall {j}\in \{1,\cdots,n\};$
			and 
			\item $X_{0}=\{0\}.$
		\end{itemize}		
		We now apply \Cref{R:Samuel-Wold} to conclude that 
		\begin{displaymath}
			[z_{1},\cdots, z_{n}, \bar{z_1}^{m_{1}} + R_{1}(z),\cdots , \bar{z_{n}}^{m_{n}} + R_{n}(z); \overline{D}] = C(\overline{D}).
		\end{displaymath}
	\end{proof}

	\section{Proof of \Cref{T:Minsker_Scv}}\label{S:Minsker_SCV}
	Before going into the proof of \Cref{T:Minsker_Scv}, we need to do some preparation and we need to prove some lemmas. 
	\par Let
	\begin{align*}
		X:=\{(z^{m_{1}}_{1},\cdots, z^{m_{n}}_{n}, \bar{z_1}^{m_{n+1}},\cdots , \bar{z_{n}}^{m_{2n}})\in \cplx^{2n}: z\in \overline{\mathbb{D}}^{n} \},
	\end{align*} and we define $\Phi:\mathbb{C}^{2n}\to \mathbb{C}^{2n}$ by 
	\begin{align*}
		\Phi\left(z_1,\cdots, z_{n},w_{1},\cdots,w_{n}\right)=\left(z^{m_{1}}_1,\cdots, z^{m_{n}}_{n},w^{m_{n+1}}_{1},\cdots,w^{m_{2n}}_{n}\right).
	\end{align*}
	\noindent Therefore,
	$\Phi^{-1}(X)=\cup_{I}X_{I}, \text{ where }$ $I\in \{0,1,\cdots,m_1-1\}\times \cdots\times \{0,1,\cdots,m_{2n}-1\}$ and for $I=(t_{1},t_{2},\cdots ,t_{2n}),$ 
	\allowdisplaybreaks
	\begin{align*}
		X_{I}:=\left\{\left(e^{\frac{2i\pi t_{1}}{m_1} }z_{1},\cdots, e^{\frac{2i\pi t_{n}}{m_n}}z_{n}, e^{\frac{2i\pi t_{n+1}}{m_{n+1}} }\bar{z_1},\cdots , e^{\frac{2i\pi t_{2n}}{m_{2n}} }\bar{z_{n}}\right): z\in \overline{\mathbb{D}}^{n} \right\}.
	\end{align*}
	We will denote $X_{(0,\cdots,0)}$ by $X_{0}.$ For each subset $\{i_1,\cdots,i_k\}\subseteq\{1,\cdots,2n\}$ and $X_{I}$ as above, we denote
	\begin{align*}
		X^{(i_1,\cdots,i_k)}_{I}:=\bigg\{\bigg(e^{\frac{2i\pi t_{1}}{m_1} }z_{1},\cdots, e^{\frac{2i\pi t_{n}}{m_n}}z_{n}, e^{\frac{2i\pi t_{n+1}}{m_{n+1}} }\bar{z_1},&,\cdots ,  e^{\frac{2i\pi t_{2n}}{m_{2n}} }\bar{z_{n}}\bigg):\\ &z\in \overline{\mathbb{D}}^{n}, z_{i_l}=0,~l=1,\cdots,k \bigg\}.
	\end{align*}	
	We consider the polynomial
	\begin{align*}
		p_{k}(z,w))=	\prod^{n}_{\substack{j=1\\
				j\notin\{i_1,\cdots,i_{k}\}	\\
		}}
		z_{j}w_{j}.
	\end{align*}
	We now prove three lemmas that are crucial in the proof of \Cref{T:Minsker_Scv}.
	
	\begin{lemma}\label{L:KLp1M}
		Let $p_{k}$ and $X^{(i_1,\cdots,i_k)}_{I}$ be as above. Then
		\begin{itemize}
			\item[(i)]$p_{k}(X^{(i_1,\cdots,i_k)}_{I})\subset L^{(i_1,\cdots,i_k)}_{I}$, where $L^{(i_1,\cdots,i_k)}_{I}$ is the half line through the origin with argument $\left({{\displaystyle2\pi\sum^{2n}_{\substack{j=1\\
							j\notin\{i_1,\cdots,i_{k}\}	\\
					}}
					\frac{t_j}{m_j}}}\right);$
			\item[(ii)]	
			$L^{(i_1,\cdots,i_k)}_{I}\cap L^{(i_1,\cdots,i_k)}_{J}=\{0\}~~\text{ for distinct } X^{(i_1,\cdots,i_k)}_{I} \text{ and } X^{(i_1,\cdots,i_k)}_{J};\text{ we also have }$
			
			\item[(iii)]
			$p_{k}^{-1}\{0\}\cap X^{(i_1,\cdots,i_k)}_{I}=\bigg\{\bigg(e^{\frac{2i\pi t_{1}}{m_1} }z_{1},\cdots, e^{\frac{2i\pi t_{n}}{m_n}}z_{n}, e^{\frac{2i\pi t_{n+1}}{m_{n+1}} } \bar{z_1},\cdots ,e^{\frac{2i\pi t_{2n}}{m_{2n}} } \bar{z_{n}}\bigg):\\ z\in \overline{\mathbb{D}}^{n}, z_{i_1}=0,\cdots,z_{i_k}=0, \displaystyle\prod^{n}_{\substack{j=1\\
					j\notin\{i_1,\cdots,i_{k}\}	\\
			}}
			z_{j} =0\bigg\}.$
		\end{itemize}
	\end{lemma}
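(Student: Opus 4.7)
My approach is to verify each of the three statements by a direct substitution of the parametrization of $X^{(i_1,\cdots,i_k)}_{I}$ into the polynomial $p_k$, and to invoke \Cref{L:Argumnt} as the main arithmetic input for part~(ii).

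For part~(i), I take a point of $X^{(i_1,\cdots,i_k)}_{I}$ with underlying parameter $(z_1,\cdots,z_n)\in\overline{\mathbb{D}}^n$ satisfying $z_{i_l}=0$ for $l=1,\cdots,k$, and compute $p_k$ on it. For each $j\in\{1,\cdots,n\}\setminus\{i_1,\cdots,i_k\}$ the corresponding factor is
\[
\bigl(e^{2i\pi t_j/m_j}z_j\bigr)\bigl(e^{2i\pi t_{n+j}/m_{n+j}}\overline{z_j}\bigr) = e^{2i\pi(t_j/m_j+t_{n+j}/m_{n+j})}|z_j|^2.
\]
Multiplying these across all such $j$, the common phase is $2\pi\sum_{j\notin\{i_1,\cdots,i_k\}}^{2n} t_j/m_j$ (with the natural convention that excluding the $i_l$-th index in a sum from $1$ to $2n$ also excludes its paired $(n+i_l)$-th index, since the latter also contributes a zero coordinate), and the remaining modulus $\prod|z_j|^2$ is a nonnegative real number. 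Hence $p_k(X^{(i_1,\cdots,i_k)}_{I})\subset L^{(i_1,\cdots,i_k)}_{I}$.

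For part~(ii), two half-lines through the origin coincide iff their arguments agree modulo $2\pi$. Assume $X^{(i_1,\cdots,i_k)}_{I}\ne X^{(i_1,\cdots,i_k)}_{J}$ with $I=(t_1,\cdots,t_{2n})$ and $J=(t'_1,\cdots,t'_{2n})$; inspecting the definition of $X_I^{(i_1,\cdots,i_k)}$ (the phases only matter at indices whose coordinates are not forced to zero) forces some $j_0$ outside the excluded set with $t_{j_0}\ne t'_{j_0}$. I would apply \Cref{L:Argumnt} with $\alpha_j=1$ for all $j$, so the $\gcd$ hypotheses hold trivially. I expect the main obstacle to be a subtlety: \Cref{L:Argumnt} as stated only asserts that the two sums are unequal, whereas what I actually need is that their difference is not an integer. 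However, re-examining the divisibility argument inside the proof of \Cref{L:Argumnt}, the derivation $m_{j_0}\mid(t'_{j_0}-t_{j_0})$ survives verbatim under the weaker hypothesis that the sums differ by an integer, since after clearing denominators by $\prod m_j$ an integer difference only contributes a term already divisible by $m_{j_0}$. Thus the two half-lines have distinct arguments modulo $2\pi$ and intersect only at $0$.

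Part~(iii) is then immediate from the factorization in part~(i): a point of $X^{(i_1,\cdots,i_k)}_{I}$ lies in $p_k^{-1}\{0\}$ exactly when one of the factors $e^{2i\pi(t_j/m_j+t_{n+j}/m_{n+j})}|z_j|^2$ with $j\notin\{i_1,\cdots,i_k\}$ vanishes, i.e.\ when $z_j=0$ for some such $j$. Combining with the inherited constraints $z_{i_l}=0$ for $l=1,\cdots,k$ gives exactly the set described in the statement, equivalently the vanishing of $\prod_{j\notin\{i_1,\cdots,i_k\}}z_j$ on top of the membership conditions.
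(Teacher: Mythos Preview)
Your proof is correct and follows essentially the same approach as the paper: direct substitution of the parametrization into $p_k$ for parts~(i) and~(iii), and an appeal to \Cref{L:Argumnt} with $\alpha_j=1$ for part~(ii). In fact you are more careful than the paper in part~(ii): the paper simply cites \Cref{L:Argumnt} to conclude $L^{(i_1,\cdots,i_k)}_I\cap L^{(i_1,\cdots,i_k)}_J=\{0\}$, whereas you correctly note that distinct half-lines require the arguments to differ modulo $2\pi$ (not merely to be unequal), and you observe that the divisibility step in the proof of \Cref{L:Argumnt} still goes through under the weaker hypothesis that the sums differ by an integer.
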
	
	
	\begin{proof}

		\begin{itemize}
			\item[(i)] Let $(z,w)\in X^{(i_1,\cdots,i_k)}_{0}.$
			\noindent Then
			\begin{align}\label{E:Img_X01_pt}
				p_{k}(z,w)&=\prod^{n}_{\substack{j=1\\
						j\notin\{i_1,\cdots,i_{k}\}	\\
				}}
				|z_{j}|^2
			\end{align}
			
			Writing $p_{k}(z,w)=u+iv,$ we get from (\ref{E:Img_X01_pt}) that $v=0 \text{ and } u\ge 0.$ Therefore,
			\begin{align}\label{E:Img_X01_Set}
				p_{k}\left(X^{(i_1,\cdots,i_k)}_{0}\right)\subset\{u+iv\in \cplx: v=0, u\ge 0\}.
			\end{align}
			
			\noindent Let $I=(t_{1},t_{2},\cdots ,t_{2n})$ be an arbitrary element of $ \{0,1,\cdots,m_1-1\}\times \{0,1,\cdots,m_2-1\}\times \cdots\times \{0,1,\cdots,m_{2n}-1\}.$
			For $(z,w)\in X_I,$ we get that
			\begin{align}\label{E:Zeros_pk_X_I}
				p_k(z,w)=e^{i\alpha}
				\prod^{n}_{\substack{j=1\\
						j\notin\{i_1,\cdots,i_{k}\}	\\
				}}
				|z_{j}|^{2}, \text{ where } \alpha={\displaystyle\left({2\pi	\sum^{2n}_{\substack{j=1\\
								j\notin\{i_1,\cdots,i_{k}\}	\\
						}}
						\frac{t_j}{m_j}}\right)}
			\end{align}
			\noindent Equality (\ref{E:Zeros_pk_X_I}) says that $p_{k}\left(X^{(i_1,\cdots,i_k)}_{I}\right)$ lies on the half real line passing through the origin with argument
			$\left({2\displaystyle\pi\sum^{2n}_{\substack{j=1\\
						j\notin\{i_1,\cdots,i_{k}\}	\\
				}}
				\frac{t_j}{m_j}}\right).$
			
			\item[(ii)]For each $I,$ we denote each of the above line by $L^{\{i_1,\cdots,i_{k}\}}_{I}.$ For $\{i_1,\cdots,i_{k}\}=\emptyset,$ we denote these half line $L^{\{i_1,\cdots,i_{k}\}}_{I}$ simply by $L_{I}.$ By putting $\alpha_{j}=1~~~ \forall {j}\in \{1,\cdots,2n\}\setminus\{i_1,\cdots,i_{k}\}$ in \Cref{L:Argumnt}, we get that $L^{\{i_1,\cdots,i_{k}\}}_{I}\cap L^{\{i_1,\cdots,i_{k}\}}_{J}=\{0\}.$
			\item[(iii)] Proof follows from (\ref{E:Zeros_pk_X_I}).
		\end{itemize}
	\end{proof}
	
	\begin{lemma}\label{L:Mk-th term}
		Assume that $\{l_1,\cdots,l_{k}\}\subset\{1,\cdots,n\}$ and $\cup_{I}X_{I}^{(l_1,\cdots,l_{k},j)}$ is polynomially convex for each $j\in\{1,\cdots,n\}\setminus\{l_1,\cdots,l_{k}\},$. Then $\cup_{j\in \{1,\cdots,n\}\setminus\{l_1,\cdots,l_{k}\}}	\left(\cup_{I}X_{I}^{(l_1,\cdots,l_{k},j)}\right)$ is polynomially convex.	
	\end{lemma}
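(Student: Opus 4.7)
The plan is to prove the statement by induction on the size of the index set $T := \{1,\cdots,n\}\setminus\{l_1,\cdots,l_k\}$, using Kallin's lemma (Result~\ref{R:Sto_apprx.}) at each step. Enumerate $T = \{j_1,\cdots,j_{n-k}\}$ and, for $m=1,\cdots,n-k$, define
\[
A_m := \bigcup_{s=1}^{m}\bigcup_I X_I^{(l_1,\cdots,l_k,j_s)}.
\]
By the hypothesis, each $\bigcup_I X_I^{(l_1,\cdots,l_k,j)}$ is polynomially convex, which gives the base case $A_1$ polynomially convex. The conclusion of the lemma is exactly that $A_{n-k}$ is polynomially convex.

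For the inductive step I would assume $A_m$ polynomially convex and apply Kallin's lemma to the pair $A_m$ and $\bigcup_I X_I^{(l_1,\cdots,l_k,j_{m+1})}$ with the separating polynomial
\[
p(z,w) := z_{j_{m+1}}\,w_{j_{m+1}}.
\]
Condition (a) of Kallin's lemma is immediate: every point of $X_I^{(l_1,\cdots,l_k,j_{m+1})}$ has the underlying parameter $z_{j_{m+1}}=0$, so both coordinates $z_{j_{m+1}}$ and $w_{j_{m+1}}$ vanish and $p\equiv 0$ on $\bigcup_I X_I^{(l_1,\cdots,l_k,j_{m+1})}$; hence its image hull is $\{0\}$ and the required hull intersection is trivially contained in $\{0\}$.

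For condition (b), I would compute $p^{-1}(0)\cap A_{m+1}$ explicitly. On each $X_I^{(l_1,\cdots,l_k,j_s)}$ with $s\le m$, evaluating $p$ at a generic point yields $e^{2\pi i (t_{j_{m+1}}/m_{j_{m+1}} + t_{n+j_{m+1}}/m_{n+j_{m+1}})}\,|z_{j_{m+1}}|^{2}$, which vanishes precisely when $z_{j_{m+1}}=0$; this slice is exactly $X_I^{(l_1,\cdots,l_k,j_s,j_{m+1})}\subset X_I^{(l_1,\cdots,l_k,j_{m+1})}$. Since $\bigcup_I X_I^{(l_1,\cdots,l_k,j_{m+1})}$ sits entirely in $p^{-1}(0)$, the intersection $p^{-1}(0)\cap A_{m+1}$ collapses to $\bigcup_I X_I^{(l_1,\cdots,l_k,j_{m+1})}$, which is polynomially convex by hypothesis. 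Kallin's lemma then gives polynomial convexity of $A_{m+1} = A_m \cup \bigcup_I X_I^{(l_1,\cdots,l_k,j_{m+1})}$, completing the induction and hence the lemma.

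The only subtle point is recognizing that the slice $p^{-1}(0)\cap X_I^{(l_1,\cdots,l_k,j_s)}$ for $s\le m$ sits inside $\bigcup_I X_I^{(l_1,\cdots,l_k,j_{m+1})}$ and therefore contributes no new pieces when passing to the union. The choice of $p$ as the monomial $z_{j_{m+1}}w_{j_{m+1}}$ (which is one factor of the polynomials $p_k$ appearing in Lemma~\ref{L:KLp1M}) is what makes this collapse transparent, and I do not anticipate any further obstacle.
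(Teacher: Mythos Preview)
Your argument is correct, and it is in fact cleaner than the paper's own proof. Both proofs proceed by induction and Kallin's lemma, but the roles of the two sets and the choice of separating polynomial are reversed. The paper organizes the indices $j\in\{1,\dots,n\}\setminus\{l_1,\dots,l_k\}$ into consecutive blocks $A_{l_1},\dots,A_{l_{k+1}}$ and runs a two-stage induction: first within each block, then across blocks. At each step it uses the product polynomial $p_t(z,w)=\prod_{j\le t,\,j\notin\{l_1,\dots,l_k\}} z_j w_j$, arranged so that $p_t$ vanishes on the \emph{accumulated} union $K_1$ and is generically nonzero on the new piece $K_2$; one then checks $p_t^{-1}(0)\cap(K_1\cup K_2)=K_1$. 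You instead enumerate $T$ once, use the single monomial $p(z,w)=z_{j_{m+1}}w_{j_{m+1}}$, and let $p$ vanish on the \emph{new} piece $B_{m+1}=\bigcup_I X_I^{(l_1,\dots,l_k,j_{m+1})}$; the slice $p^{-1}(0)\cap A_m$ then lands inside $B_{m+1}$ via the containment $X_I^{(l_1,\dots,l_k,j_s,j_{m+1})}\subset X_I^{(l_1,\dots,l_k,j_{m+1})}$, so $p^{-1}(0)\cap A_{m+1}=B_{m+1}$, which is polynomially convex by hypothesis. This avoids the block decomposition entirely and requires only one induction with a one-factor polynomial, at no loss of generality. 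The trade-off is that the paper's product polynomial is the one reused (in its full form) in Lemma~\ref{L:Gnrl_stpM} and the proof of Theorem~\ref{T:Minsker_Scv}, so its appearance here foreshadows the later argument; your choice is more economical for this lemma in isolation.
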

	
	\begin{proof}
		Without loss of generality, we assume that $1\le<l_2<\cdots<l_k\le n.$ Clearly,
		\begin{align*}
			\cup_{j\in \{1,\cdots,n\}\setminus\{l_1,\cdots,l_{k}\}}	\bigg(\cup_{I}X_{I}^{(l_1,\cdots,l_{k},j)}\bigg)=\cup_{j=1}^{k+1}A_{l_j},
		\end{align*}
		where 
		\begin{align*}
			A_{l_{1}}:&=\cup_{j=1}^{l_1-1}	\bigg(\cup_{I}X_{I}^{(l_1,\cdots,l_{k},j)}\bigg);\\
			A_{l_{r+1}}:&=	\cup_{j=l_{r}+1}^{l_{r+1}-1}	\bigg(\cup_{I}X_{I}^{(l_1,\cdots,l_{k},j)}\bigg)~\text{ for }r=1,\cdots,k-1;\text{ and }\\
			A_{l_{k+1}}:&=	\cup_{j=l_{k+1}}^{n}	\bigg(\cup_{I}X_{I}^{(l_1,\cdots,l_{k},j)}\bigg).
		\end{align*}
		First, we prove that each $A_{l_j}$ is polynomially convex. Without loss of generality, it is enough to show that $A_{l_{1}}$ is polynomially convex. We show this by induction principle. Given that each $\bigg(\cup_{I}X_{I}^{(l_1,\cdots,l_{k},j)}\bigg)$ is polynomially convex.
		Assume that 	
		\begin{align*}
			K_{1}:=\cup_{j=1}^{t}	\bigg(\cup_{I}X_{I}^{(l_1,\cdots,l_{k},j)}\bigg), ~~t<l_1-1
		\end{align*}
		is polynomially convex. We need to show that 
		\begin{align*}
			\cup_{j=1}^{t+1}\bigg(\cup_{I}X_{I}^{(l_1,\cdots,l_{k},j)}\bigg)=K_{1}\cup X_{I}^{(l_1,\cdots,l_{k},t+1)}=:K_{1}\cup K_{2}
		\end{align*}
		is polynomially convex.	For this, we consider the polynomial
		\begin{align*}
			p_{t}(z,w)=\prod^{t}_{\substack{j=1\\
					j\notin\{l_1,\cdots,l_{k}\}	\\
			}}
			z_{j}w_{j}.
		\end{align*}
		\noindent Then we have the following:
		\begin{itemize}
			\item $p_{t}(K_{1})=\{0\}.$ 
			\item $p_{t}(K_{2}\setminus K_{1})\not 
			=\{0\}:$ using \Cref{L:KLp1M}, assertion (iii), we get that for $(z,w)\in K_{2}\setminus K_{1},$ $p_{t}(z,w)=0$ implies
			$\displaystyle \prod^{t}_{\substack{j=1\\
					j\notin\{l_1,\cdots,l_{k}\}	\\
			}}
			z_{j}=0.$ Since $(z,w) \notin K_{1},$ this is not possible. Therefore, $p_{t}(K_{2}\setminus E_{1})\not 
			=\{0\}$ and  $p_{t}(K_{1})\cap p_{t}(K_{2})=\{0\}.$
			\item by \Cref{L:KLp1M}, assertion (iii), we get that $p_{t}^{-1}\{0\}\cap K_{1} =K_{1}$ and $p_{t}^{-1}\{0\}\cap K_{2}\subset K_{1}.$ Therefore, $p_{t}^{-1}\{0\}\cap \bigg(K_1\cup K_{2}\bigg)=K_{1}$ is polynomially convex.
		\end{itemize}
		
		\noindent Therefore, using Kallin's lemma we get that 
		$K_{1}\cup K_{2}=		\cup_{j=1}^{t+1}\bigg(\cup_{I}X_{I}^{(l_1,\cdots,l_{k},j)}\bigg)$ is polynomially convex, and hence, by induction principle, $A_{l_1}$ is polynomially convex.
		
		Now we show that
		\begin{align*}
			\cup_{j\in \{1,\cdots,n\}\setminus\{l_1,\cdots,l_{k}\}}	\bigg(\cup_{I}X_{I}^{(l_1,\cdots,l_{k},j)}\bigg)=\cup_{j=1}^{k+1}A_{l_j},
		\end{align*}
		is polynomially convex. Again, we will apply induction principle. Assume that $E_{1}:=\cup_{j=1}^{s}A_{l_j}$ is polynomially convex for $s<k+1$. We need to show that
		\begin{align*}
			\cup_{j=1}^{s+1}A_{l_j}=E_{1}\cup A_{l_{s+1}}=:E_{1}\cup E_{2}
		\end{align*} 
		is polynomially convex. 
		\noindent	We consider the polynomial
		
		\begin{align*}
			p_{s}(z,w)=	\prod^{s}_{\substack{j=1\\
					j\notin\{l_1,\cdots,l_{k}\}	\\
			}}
			z_{j}w_{j}.
		\end{align*}
		Then we have the following:
		\begin{itemize}
			\item $p_{s}(E_{1})=\{0\}.$
			\item $p_{s}(E_{2}\setminus E_{1})\not 
			=\{0\}:$ using \Cref{L:KLp1M}, assertion (iii), we get that for $(z,w)\in E_{2}\setminus E_{1},$ $p_{s}(z,w)=0$ implies
			$\displaystyle \prod^{s}_{\substack{j=1\\
					j\notin\{l_1,\cdots,l_{k}\}	\\
			}}
			z_{j}=0.$ Since $(z,w) \notin E_{1},$ this is not possible. Therefore, $p_{s}(E_{2}\setminus E_{1})\not 
			=\{0\}$ and  $p_{s}(E_{1})\cap p_{s}(E_{2})=\{0\}.$
			\item applying \Cref{L:KLp1M}, assertion (iii), we get that $p_{s}^{-1}\{0\}\cap (E_{1}\cup E_{2})=E_{1},$ which is polynomially convex.
		\end{itemize}	
		Again, by Kallin's lemma, we conclude that $\cup_{j=1}^{s+1}A_{l_j}$ is polynomially convex. Therefore, by induction principle, we obtain that $\cup_{j=1}^{k+1}A_{l_j}$ is polynomially convex.
	\end{proof}	
	
	\begin{lemma}\label{L:Gnrl_stpM}
		Assume that $\{l_1,\cdots,l_{k}\}\subset\{1,\cdots,n\}$ and for each $j\in\{1,\cdots,n\}\setminus\{l_1,\cdots,l_{k}\},$ $\cup_{I}X_{I}^{(l_1,\cdots,l_{k},j)}$ is polynomially convex.
		Then $\cup_{I}X_{I}^{(l_1,\cdots,l_{k})}$ is polynomially convex.
	\end{lemma}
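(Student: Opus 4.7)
The plan is to adjoin the distinct pieces $X_I^{(l_1,\cdots,l_k)}$ one by one to a polynomially convex ``boundary'' set via Kallin's lemma (\Cref{R:Sto_apprx.}), using the polynomial
\begin{align*}
	p_k(z,w) \;=\; \prod_{\substack{j=1\\ j\notin\{l_1,\cdots,l_k\}}}^{n} z_j w_j
\end{align*}
of \Cref{L:KLp1M}. The boundary set we start with is
\begin{align*}
	Y \;:=\; \bigcup_{j\in\{1,\cdots,n\}\setminus\{l_1,\cdots,l_k\}} \bigcup_I X_I^{(l_1,\cdots,l_k,j)},
\end{align*}
which is polynomially convex by our hypothesis together with \Cref{L:Mk-th term}.

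Enumerate the (finitely many) distinct sets in $\{X_I^{(l_1,\cdots,l_k)}\}_I$ as $X^{(1)},\cdots,X^{(N)}$ and set $K_0:=Y$, $K_s:=Y\cup X^{(1)}\cup\cdots\cup X^{(s)}$. Each $X^{(s)}$ is polynomially convex: a unitary change of coordinates in $\mathbb{C}^{2n}$ (dividing each coordinate by its unimodular prefactor) takes it to $\{(\zeta,\bar\zeta):\zeta\in\overline{\mathbb{D}}^n,\,\zeta_{l_1}=\cdots=\zeta_{l_k}=0\}$, on which the $2n$ coordinate functions separate points and hence polynomials are uniformly dense by Stone--Weierstrass, forcing polynomial convexity.

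Now assume inductively that $K_s$ is polynomially convex and apply Kallin's lemma to $K_s$ and $X^{(s+1)}$ with the polynomial $p_k$. For the image condition: by \Cref{L:KLp1M}(i)--(ii) each $p_k(X^{(t)})$ lies in its own half-line $L^{(t)}$ through the origin, distinct $L^{(t)}$'s meet only at $0$, and $p_k(Y)=\{0\}$ since every point of $Y$ has some $z_j=0$ with $j\notin\{l_1,\cdots,l_k\}$; thus $p_k(K_s)\subset\{0\}\cup\bigcup_{t\leq s}L^{(t)}$ is contained in a compact finite star of rays through the origin in $\mathbb{C}$, which is polynomially convex (its complement in $\mathbb{C}$ is connected), so $\widehat{p_k(K_s)}\cap\widehat{p_k(X^{(s+1)})}\subset\{0\}$. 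For the zero-fibre condition: \Cref{L:KLp1M}(iii) gives $p_k^{-1}\{0\}\cap X^{(t)}\subset\bigcup_{j\notin\{l_1,\cdots,l_k\}} X_{I(t)}^{(l_1,\cdots,l_k,j)}\subset Y$ for every $t$, while $Y\subset p_k^{-1}\{0\}$ trivially; hence $p_k^{-1}\{0\}\cap(K_s\cup X^{(s+1)})=Y$, already polynomially convex. Kallin's lemma therefore yields $K_{s+1}$ polynomially convex, and iterating to $s=N$ gives $\bigcup_I X_I^{(l_1,\cdots,l_k)}=K_N$ polynomially convex.

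The delicate point is the persistence of $Y$ as the zero-fibre of $p_k$ inside $K_s$ throughout the induction: the polynomial $p_k$ is chosen precisely so that on each middle piece $X^{(t)}$ its vanishing locus falls into the boundary $Y$, so at every stage Kallin's lemma's polynomial-convexity-of-fibre hypothesis holds automatically regardless of the order in which the pieces are adjoined.
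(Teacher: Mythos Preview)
Your proof is correct and follows essentially the same approach as the paper's: you use the same polynomial $p_k$ from \Cref{L:KLp1M}, the same boundary set $Y$ (whose polynomial convexity comes from \Cref{L:Mk-th term}), and the same separation into half-lines to feed Kallin's lemma. The only difference is cosmetic: the paper invokes Kallin's lemma in one line after noting that the $p_k$-images lie in pairwise disjoint half-lines and the common zero-fibre equals $Y$, whereas you spell out the repeated application as an explicit induction adjoining the pieces $X^{(1)},\dots,X^{(N)}$ one at a time and verify polynomial convexity of each individual $X^{(s)}$ along the way.
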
		
	\begin{proof}
		Since $\cup_{I}X_{I}^{(l_1,\cdots,l_{k},j)}$ is polynomially convex, by \Cref{L:Mk-th term}, $\cup_{j\notin\{l_1,\cdots,l_{k}\}}^{n}	\left(\cup_{I}X_{I}^{(l_1,\cdots,l_{k},j)}\right)$ is polynomially convex.
		\noindent We consider the polynomial
		\begin{align*}
			p_{1}(z,w)=	\prod^{n}_{\substack{j=1\\
					j\notin\{l_1,\cdots,l_{k}\}	\\
			}}
			z_{j}w_{j}.
		\end{align*}
		\noindent Therefore
		\begin{itemize}
			\item By \Cref{L:KLp1M}, each $p_{1}\bigg(X_{I}^{(l_1,\cdots,l_{k})}\bigg)\subset L_{I}^{(l_1,\cdots,l_k)}$  with $L_{I}^{(l_1,\cdots,l_k)}\cap L_{J}^{(l_1,\cdots,l_k)}=\{0\},$ where each $L_{I}^{(l_1,\cdots,l_k)}$ is the half real line starting from the origin with argument $\left({{\displaystyle2\pi\sum^{2n}_{\substack{j=1\\
							j\notin\{i_1,\cdots,i_{k}\}	\\
					}}
					\frac{t_j}{m_j}}}\right).$
			\item By \Cref{L:KLp1M}, assertion (iii), we get that $p_{1}^{-1}\{0\}\cap\bigg(\cup_{I}X_{I}^{(l_1,\cdots,l_{k})}\bigg)=	\cup_{j\notin\{l_1,\cdots,l_{k}\}}^{n}	\bigg(\cup_{I}X_{I}^{(l_1,\cdots,l_{k},j)}\bigg).$
		\end{itemize}
		From above we can say that $p_{1}^{-1}\{0\}\cap\bigg(\cup_{I}X_{I}^{(l_1,\cdots,l_{k})}\bigg)$ is polynomially convex. Therefore, by Kallin's lemma, we infer that 
		$\left(\cup_{I}X_{I}^{(l_1,\cdots,l_{k})}\right)$ is polynomially convex.		
	\end{proof}
	We now begin the proof of \Cref{T:Minsker_Scv}.
	\begin{proof}[Proof of Theorem~\ref{T:Minsker_Scv}]
		Since $X_{I}$ is the image of a compact subset of $\mathbb{R}^{2n}\subset \cplx^{2n}$ under an invertible $\cplx$-linear map, 	$\poly(X_{I})=\smoo(X_{I})$  for all $I\in \{0,1,\cdots,m_1-1\}\times \cdots\times \{0,1,\cdots,m_{2n}-1\}.$ 
		
		\noindent Next, we wish to show that $\poly(\Phi^{-1}(X))= \smoo(\Phi^{-1}(X)).$\\
		
		\smallskip 		
		\noindent We consider the polynomial
		\begin{align*}
			p(z,w)=	\prod^{n}_{\substack{j=1
			}}
			z_{j}w_{j}.
		\end{align*}
		\noindent For $\{i_1,\cdots,i_k\}=\emptyset,$ in view of \Cref{L:KLp1M}, we can say that:
		\begin{itemize}
			\item For each $I=(t_1,\cdots,t_{2n})\in \{0,1,\cdots,m_1-1\}\times \cdots\times \{0,1,\cdots,m_n-1\},$ $p(X_{I})\subset L_{I},$ where $L_{I}$ is a half-line starting from the origin with argument $\left({{\displaystyle 2\pi\sum^{2n}_{\substack{j=1
					}}
					\frac{t_j}{m_j}}}\right).$
			\item $L_{I}\cap L_{J}=\{0\}~\text{ for all } {I\ne J}.$
		\end{itemize}
		
		\noindent In view of \Cref{L:Gnrl_stpM}, and by application of induction principle on $n,$ we can say that
		
		\begin{align*}
			p^{-1}\{0\}\cap\bigg(\cup_{I}X_{I}\bigg)=\cup^{n}_{j=1}\left(\cup_{I}X_{I}^{(j)}\right)
		\end{align*} 
		is polynomially convex.
		\noindent Since for each $I\in \left\{0,1,\cdots,m_1-1\right\}\times \cdots \times\left\{0,1,\cdots,m_n-1\right\},$ $\poly(X_{I})= \smoo(X_{I}),$ applying \Cref{R:Sto_apprx.} we conclude that 
		\begin{displaymath}
			\poly\left(\cup_{I}X_I\right)= \smoo\left(\cup_{I}X_I\right),~ \text{ that is } \poly\left(\Phi^{-1}(X)\right)=\smoo\left(\Phi^{-1}(X)\right).
		\end{displaymath}
		So, by \Cref{R:Sto_propr}, we get that
		\begin{displaymath}
			\poly(X)= \smoo(X).
		\end{displaymath}		
		
		\noindent Clearly, 
		$z^{m_{1}}_{1},\cdots, z^{m_{n}}_{n}, \bar{z_1}^{m_{n+1}} ,\cdots , \bar{z_{n}}^{m_{2n}}$ separates points on $\overline{\mathbb{D}}^{n}$ because $\gcd(m_i,m_j)=1\text{ for all } i\ne j.$
		\noindent Therefore,
		\begin{align*}      
			[z^{m_1}_{1},\cdots, z^{m_n}_{n}, \bar{z_1}^{m_{n+1}},\cdots , \bar{z_{n}}^{m_{2n}}; \overline{\mathbb{D}}^{n}] = C(\overline{\mathbb{D}}^{n}).
		\end{align*}
		
	\end{proof}

	
	\section{Proof of \Cref{T:Main Result1}}\label{S:Main Result1}
	The structure of the proof of \Cref{T:Main Result1} is similar to that of \Cref{T:Main Result2}. Before going into the proof of \Cref{T:Main Result1}, we need some preparations, including some lemmas as in the proof of \Cref{T:Main Result2}.
	\par 
	We set	
	\begin{align*}
		X:=\{(z^{m_{1}}_{1},\cdots, z^{m_{n}}_{n}, \bar{z_1}^{m_{n+1}} + R_{1}(z),\cdots , \bar{z_{n}}^{m_{2n}} + R_{n}(z)): z\in D(\delta_{0})\},
	\end{align*} and we define $\Phi:\mathbb{C}^{2n}\to \mathbb{C}^{2n}$ by 
	\begin{align*}
		\Phi\left(z_1,\cdots, z_{n},w_{1},\cdots,w_{n}\right)=\left(z^{m_{1}}_1,\cdots, z^{m_{n}}_{n},w_{1},\cdots,w_{n}\right).~~~\Phi \text{ is a proper holomorphic map.}
	\end{align*}
	\noindent We have,
	$\Phi^{-1}(X)=\cup_{I}X_{I}, \text{ where }$ $I\in \{0,1,\cdots,m_1-1\}\times \cdots\times \{0,1,\cdots,m_n-1\}$ and for each $I=(t_{1},t_{2},\cdots ,t_{n}),$
	\allowdisplaybreaks
	\begin{align*}
		X_{I}:=\left\{\left(e^{\frac{2i\pi t_{1}}{m_1} }z_{1},\cdots, e^{\frac{2i\pi t_{n}}{m_n}}z_{n}, \bar{z_1}^{m_{n+1}} + R_{1}(z),\cdots , \bar{z_{n}}^{m_{2n}} + R_{n}(z)\right): z\in D(\delta_{0}) \right\}.
	\end{align*}
	We will denote $X_{(0,\cdots,0)}$ by $X_{0}.$		
	For $\{i_1,\cdots,i_k\}\subseteq\{1,\cdots,n\}$ and $X_{I}$ as above, we denote
	\begin{align*}
		X^{(i_1,\cdots,i_k)}_{I}:=\bigg\{\bigg(e^{\frac{2i\pi t_{1}}{m_1} }z_{1},\cdots, e^{\frac{2i\pi t_{n}}{m_n}}z_{n}, \bar{z_1}^{m_{n+1}}& + R_{1}(z),\cdots , \bar{z_{n}}^{m_{2n}} + R_{n}(z)\bigg):\\ &z\in D(\delta_{0}), z_{i_l}=0, l=1,\cdots,k\bigg\}.
	\end{align*}
	
	We consider the polynomial
	\begin{align*}
		p_{k}(z,w))=	\prod^{n}_{\substack{j=1\\
				j\notin\{i_1,\cdots,i_{k}\}	\\
		}}
		z_{j}^{m_{n+j}}w_{j},
	\end{align*}	
	where $z=(z_1,\cdots,z_{n}),w=(w_1,\cdots,w_{n})\in \cplx^{n}.$

	\begin{lemma}\label{L:KLp1}
		$p_{k}(X^{(i_1,\cdots,i_k)}_{I})\subset \omega^{(i_1,\cdots,i_k)}_{I}$, where $\omega^{(i_1,\cdots,i_k)}_{I}$ is a closed sector in the complex plane with vertex at the origin and $L^{(i_1,\cdots,i_k)}_{I}$  as the angular bisector with argument $\left({\displaystyle 2\pi\sum^{n}_{\substack{j=1\\
					j\notin\{i_1,\cdots,i_{k}\}	\\
			}}
			\frac{t_jm_{n+j}}{m_j}}\right).$
		Furthermore, $\omega^{(i_1,\cdots,i_k)}_{I}\cap\omega^{(i_1,\cdots,i_k)}_{J}=\{0\}~~\text{ for distinct } X^{(i_1,\cdots,i_k)}_{I}$ and $X^{(i_1,\cdots,i_k)}_{J}.$
		
	\end{lemma}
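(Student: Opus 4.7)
The plan is to compute $p_k$ on $X_I^{(i_1,\ldots,i_k)}$, split off the global phase that pulls out of the powers of the roots of unity, and show the remaining product is an almost-positive real quantity once $\delta$ has been chosen small enough. Parametrising a point of $X_I^{(i_1,\ldots,i_k)}$ by $\zeta\in\overline{D(\delta)}$ with $\zeta_{i_l}=0$, a direct computation gives
\[
p_k(z,w)=e^{i\alpha}\prod_{\substack{j=1\\ j\notin\{i_1,\ldots,i_k\}}}^{n}\bigl(|\zeta_j|^{2m_{n+j}}+\zeta_j^{m_{n+j}}R_j(\zeta)\bigr),\qquad \alpha=2\pi\!\!\sum_{\substack{j=1\\ j\notin\{i_1,\ldots,i_k\}}}^{n}\!\!\frac{t_j m_{n+j}}{m_j},
\]
which already matches the claimed argument of the bisector $L_I^{(i_1,\ldots,i_k)}$.

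To locate each factor near the positive real axis I would use hypothesis (ii): for any $\eps>0$, one can shrink $\delta$ below $\delta_0$ so that the uniform bound $|R_j(\zeta)|\le\eps|\zeta_j|^{m_{n+j}}$ holds on $\overline{D(\delta)}$ for all $j$. Then each nonzero factor satisfies $\rl\ge(1-\eps)|\zeta_j|^{2m_{n+j}}$ and $|\imag|\le\eps|\zeta_j|^{2m_{n+j}}$, placing its argument in $(-\theta_0,\theta_0)$ with $\theta_0=\arctan(\eps/(1-\eps))$. A product of at most $n$ such factors is then either $0$ or has argument in $(-n\theta_0,n\theta_0)$, so defining $\omega_I^{(i_1,\ldots,i_k)}$ to be the closed sector of half-angle $n\theta_0$ bisected by $L_I^{(i_1,\ldots,i_k)}$ yields $p_k(X_I^{(i_1,\ldots,i_k)})\subset\omega_I^{(i_1,\ldots,i_k)}$.

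For the disjointness clause, whenever $X_I^{(i_1,\ldots,i_k)}$ and $X_J^{(i_1,\ldots,i_k)}$ are distinct there is some $j_0\notin\{i_1,\ldots,i_k\}$ with $t_{j_0}\ne t'_{j_0}$. Applying \Cref{L:Argumnt} with $\alpha_j=m_{n+j}$ for $j\le n$ and $\alpha_j=0$ otherwise (the required coprimality $\gcd(m_{j_0},m_{n+j_0})=1$ is automatic since $j_0\ne n+j_0$) shows that the rationals $\sum t_j m_{n+j}/m_j$ and $\sum t'_j m_{n+j}/m_j$ are unequal; inspecting that same divisibility argument, one sees that it in fact forces them to differ modulo $1$, so the rays $L_I^{(i_1,\ldots,i_k)}$ and $L_J^{(i_1,\ldots,i_k)}$ are distinct in $[0,2\pi)$. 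Since there are only finitely many such rays, their minimum angular gap $\beta>0$ is positive; choosing $\eps$ small enough that $2n\theta_0<\beta$, and the corresponding $\delta$, makes the closed sectors meet pairwise only at the origin.

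The main hurdle is reading (ii) uniformly in the remaining variables to obtain the bound $|R_j(\zeta)|\le\eps|\zeta_j|^{m_{n+j}}$ on the whole polydisc, so that a single $\delta$ works simultaneously for every $j$, every stratum $\{i_1,\ldots,i_k\}$, and every phase vector $I$; once fixed, this $\delta$ is inherited by the remaining arguments in the proof of \Cref{T:Main Result1}.
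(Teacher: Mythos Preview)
Your proposal is correct and follows the same strategy as the paper: pull out the global phase $e^{i\alpha}$, use hypothesis~(ii) to confine the remaining product to a narrow sector about the positive real axis after shrinking $\delta$, and invoke \Cref{L:Argumnt} with $\alpha_j=m_{n+j}$ for $j\le n$ and $\alpha_j=0$ otherwise to separate the bisectors. The only difference is cosmetic: the paper expands $\prod_j\bigl(|z_j|^{2m_{n+j}}+z_j^{m_{n+j}}R_j\bigr)$ binomially and bounds the collected error terms by $\eps=\sum_{r\ge1}\binom{n-k}{r}(\eps')^r$ times the leading term to get $|v|\le\frac{\eps}{1-\eps}u$, whereas you bound the argument of each factor by $\arctan\!\bigl(\eps/(1-\eps)\bigr)$ and add; your route avoids the combinatorial expansion and gives the same conclusion. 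Your remark that the divisibility argument in \Cref{L:Argumnt} actually forces the two sums to differ modulo~$1$ (so that the rays, not just the rational numbers, are distinct) is correct and makes explicit a step the paper uses tacitly when it sets $\theta'=\min_{I\ne J}\{\text{angle between }L_I\text{ and }L_J\}>0$.
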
	
	\begin{proof}
		Let $(z,w)\in X^{(i_1,\cdots,i_k)}_{0}.$
		\noindent We now do some computations here:
		\begin{align}\label{E:Image of G}
			\notag p_{k}(z,w)&=\prod^{n}_{\substack{j=1\\
					j\notin\{i_1,\cdots,i_{k}\}	\\
			}}
			\bigg(	|z_{j}|^{2m_{n+j}}+	z_{j}^{m_{n+j}}R_{j} \bigg)\\\notag
			&=\prod^{n}_{\substack{j=1\\
					j\notin\{i_1,\cdots,i_{k}\}	\\
			}}
			|z_{j}|^{2m_{n+j}}+\bigg [\sum_{1\le l_1\le n}\al_{(l_1)}(z)+\sum_{1\le l_1\ne l_2\le n}\al_{(l_1,l_2)}(z)+\cdots\\
			& \quad +\sum_{1\le l_1\ne l_2\ne \cdots\ne l_{n-k-1}\le n} \al_{(l_1,\cdots,l_{n-k-1})}(z)
			+\al_{(l_1,\cdots,l_{n-k})}(z)\bigg],
		\end{align}
		\noindent where $\{l_{1},\cdots,l_{r}\}\subset\{1,\cdots,n\}\setminus\{i_1,\cdots,i_{k}\}$ and
		
		\begin{align*}
			\al_{(l_1,\cdots,l_r)}(z):=\bigg(	\prod^{n}_{\substack{j=1\\
					j\notin\{i_1,\cdots,i_{k}\}	\\
					j\notin\{l_1,\cdots,l_r\}
			}}
			|z_{j}|^{2m_{n+j}} \bigg)
			\prod_{\substack{
					j\in\{l_1,\cdots,l_{r}\}	\\
			}}
			\bigg({z_{j}}^{m_{n+j}} R_{j}\bigg).
		\end{align*}	
		
		\noindent	Since $R_{j}(z)\sim o(|z_{j}|^{m_{n+j}}) \text{ as } z_{j}\to 0$ for $j=1,\cdots,n;$ choose $\eps'>0,$ then there exists $\delta_{j}>0$ such that 
		\begin{align}\label{E:Order of R_j}
			|R_{j}|\le\eps'|z_j|^{m_{n+j}} \text{ whenever } |z_{j}|\le \delta_{j}~~ \forall {j}=1,2,\cdots,n. 
		\end{align}
		Taking $\delta'=\min_{1\le j\le n}\{\delta_j\},$ we obtain from (\ref{E:Order of R_j}) that 
		
		\allowdisplaybreaks
		\begin{align}\label{E:estmate_rth trm}
			\notag	\left|\sum_{1\le l_1\ne \cdots\ne l_r\le n}
			\al_{(l_1,\cdots,l_r)}(z)\right|&\le \left|\sum_{1\le l_1\ne \cdots\ne l_r\le n} \bigg(	\prod^{n}_{\substack{j=1\\
					j\notin\{i_1,\cdots,i_{k}\}	\\
					j\notin\{l_1,\cdots,l_r\}
			}}
			|z_{j}|^{2m_{n+j}} \bigg)
			\prod_{\substack{
					j\in\{l_1,\cdots,l_{r}\}\\
			}}
			\bigg({z_{j}}^{m_{n+j}} R_{j}\bigg)\right|\\\notag
			&\le \sum_{1\le l_1\ne \cdots\ne l_r\le n} \bigg(	\prod^{n}_{\substack{j=1\\
					j\notin\{i_1,\cdots,i_{k}\}	\\
					j\notin\{l_1,\cdots,l_r\}
			}}
			|z_{j}|^{2m_{n+j}} \bigg)
			\prod_{\substack{
					j\in\{l_1,\cdots,l_{r}\}\\
			}}
			\left |\bigg({z_{j}}^{m_{n+j}} R_{j}\bigg)\right|\\
			&\le{n-k\choose r}{\eps'}^r	\prod^{n}_{\substack{j=1\\
					j\notin\{i_1,\cdots,i_{k}\}	\\
			}}
			|z_{j}|^{2m_{n+j}}~~\forall z\in \overline{D(\delta')}. 
		\end{align}
		\noindent	Writing $p_{k}(z)=u+iv,$ we get from (\ref{E:Image of G}) that for $(z,w)	\in X^{(i_1,\cdots,i_k)}_{0},$ 
		\begin{align*}
			|v|&=|\imag(p_{k}(z,w))|\\
			&=\bigg|\imag\bigg [\sum_{1\le l_1\le n}\al_{(l_1)}(z)+\sum_{1\le l_1\ne l_2\le n}\al_{(l_1,l_2)}(z)+\cdots\\
			& \quad +\sum_{1\le l_1\ne l_2\ne \cdots\ne l_{n-k-1}\le n} \al_{(l_1,\cdots,l_{n-k-1})}(z)
			+\al_{(l_1,\cdots ,l_{n-k})}(z)\bigg]\bigg|\\
			&\le \bigg|\sum_{1\le l_1\le n}\al_{(l_1)}(z)\bigg|+\bigg|\sum_{1\le l_1\ne l_2\le n}\al_{(l_1,l_2)}(z)\bigg|+\cdots+\bigg|\sum_{\substack{1\le l_1\ne l_2\ne \\\cdots\ne l_{n-k-1}\le n}}\al_{(l_1,\cdots ,l_{n-k-1})}(z)\bigg|+\bigg|\al_{(l_1,\cdots,l_{n})}(z)\bigg|\\
			&\le\sum_{r=1}^{n-k}{n-k\choose r}{\eps'}^r	\prod^{n}_{\substack{j=1\\
					j\notin\{i_1,\cdots,i_{k}\}	\\
			}}
			|z_{j}|^{2m_{n+j}}.
		\end{align*}
		\noindent We denote $\eps:= \displaystyle\sum_{r=1}^{n-k}{n-k\choose r}{\eps'}^r.$
		\noindent Therefore,
		\begin{align}\label{E:Imaginary part of p}
			|v|\le\eps	\prod^{n}_{\substack{j=1\\
					j\notin\{i_1,\cdots,i_{k}\}	\\
			}}
			|z_{j}|^{2m_{n+j}}~~\forall z\in \overline{D(\delta')}.
		\end{align}	
		
		\noindent Similarly, for any $(z,w)\in X^{(i_1,\cdots,i_k)}_{0},$ we get from (\ref{E:Image of G}) that the real part of $p_k$ is 	
		
		\begin{align}
			\notag u=\prod^{n}_{\substack{j=1\\
					j\notin\{i_1,\cdots,i_{k}\}	\\
			}}
			|z_{j}|^{2m_{n+j}}+&\rl\bigg [\sum_{1\le l_1\le n}\al_{l_1}(z)+\sum_{1\le l_1\ne l_2\le n}\al_{(l_1,l_2)}(z)+\cdots\\&
			\quad +\sum_{1\le l_1\ne l_2\ne \cdots\ne l_{n-1}\le n} \al_{(l_1,l_2,\cdots l_{n-k-1})}(z)
			+\al_{(l_1,l_2,\cdots ,l_{n-k})}(z)\bigg].
		\end{align}
		
		\noindent We compute:
		\begin{align*}
			&\rl\bigg [\sum_{1\le l_1\le n}\al_{(l_1)}(z)+\sum_{1\le l_1\ne l_2\le n}\al_{(l_1,l_2)}(z)+\cdots\\
			& \quad +\sum_{1\le l_1\ne l_2\ne \cdots\ne l_{n-k-1}\le n} \al_{(l_1,l_2,\cdots,l_{n-k-1})}(z)
			+\al_{(l_1,l_2,\cdots,l_{n-k})}(z)\bigg]\\	
			&\le \bigg|\sum_{1\le l_1\le n}\al_{(l_1)}(z)\bigg|+\bigg|\sum_{1\le l_1\ne l_2\le n}\al_{(l_1,l_2)}(z)\bigg|+\cdots+\bigg|\sum_{\substack{1\le l_1\ne 	l_2\ne \\\cdots\ne l_{n-k-1}}\le n}\al_{(l_1,\cdots,l_{n-k-1})}(z)\bigg|+\bigg|\al_{(l_1,\cdots,l_{n-k})}(z)\bigg|\\
			&\le\sum_{r=1}^{n-k}{n-k\choose r}{\eps'}^r	\prod^{n}_{\substack{j=1\\
					j\notin\{i_1,\cdots,i_{k}\}	\\
			}}
			|z_{j}|^{2m_{n+j}}	
			=\eps	\prod^{n}_{\substack{j=1\\
					j\notin\{i_1,\cdots,i_{k}\}	\\
			}}
			|z_{j}|^{2m_{n+j}}.
		\end{align*}
		
		\noindent Therefore 
		\allowdisplaybreaks
		\begin{align*}
			-\eps	\prod^{n}_{\substack{j=1\\
					j\notin\{i_1,\cdots,i_{k}\}	\\
			}}
			|z_{j}|^{2m_{n+j}}\le&\rl\bigg [\sum_{1\le l_1\le n}\al_{(l_1)}(z)+\sum_{1\le l_1\ne l_2\le n}\al_{(l_1,l_2)}(z)+\cdots\\
			& \quad +\sum_{1\le l_1\ne l_2\ne \cdots\ne l_{n-k-1}\le n} \al_{(l_1,l_2,\cdots,l_{n-k-1})}(z)
			+\al_{(l_1,l_2,\cdots,l_{n-k})}(z)\bigg]\\
			&\le \eps	\prod^{n}_{\substack{j=1\\
					j\notin\{i_1,\cdots,i_{k}\}	\\
			}}
			|z_{j}|^{2m_{n+j}}.
		\end{align*}	
		\noindent	Hence 
		\begin{align}\label{E: Real part of p}
			(1-\eps) 	\prod^{n}_{\substack{j=1\\
					j\notin\{i_1,\cdots,i_{k}\}	\\
			}}
			|z_{j}|^{2m_{n+j}}\le u \le (1+\eps)	\prod^{n}_{\substack{j=1\\
					j\notin\{i_1,\cdots,i_{k}\}	\\
			}}
			|z_{j}|^{2m_{n+j}}.
		\end{align}
		\noindent		We choose $\eps'$ in such a way such that $\eps< 1$. In view of (\ref{E:Imaginary part of p}) and (\ref{E: Real part of p}), we obtain that
		
		\begin{align}
			|v|\le \eps\prod^{n}_{\substack{j=1\\
					j\notin\{i_1,\cdots,i_{k}\}	\\
			}}
			|z_{j}|^{2m_{n+j}}\le \left(\frac{\eps}{1-\eps}\right)u, u\ge 0, \forall z\in\overline{D(\delta')}, \text{ by shrinking } \delta' \text{ if required}.
		\end{align}
		\noindent		Above inequalities says that 
		\begin{align}\label{E:image of X0}
			p_{k}(X^{(i_1,\cdots,i_k)}_{0})\subset\left\{u+iv\in \cplx: |v|\le \left(\frac{\eps}{1-\eps}\right)u,~~ u\ge 0\right\}.
		\end{align}
		
		\noindent We set $\tht':=\min_{I\ne J}\left\{\text { angle between the lines } 	L^{(i_1,\cdots,i_k)}_{I} \text{ and }  L^{(i_1,\cdots,i_k)}_{J}\right\},$ and we take $\tht<\min\{\tht',\frac{\pi}{2}\}$ where, $L^{(i_1,\cdots,i_k)}_{I}$ is the half line through the origin with argument ${\left({\displaystyle2\pi\sum^{n}_{\substack{j=1\\
						j\notin\{i_1,\cdots,i_{k}\}	\\
				}}
				\frac{t_jm_{n+j}}{m_j}}\right)}$ for $I=(t_1,\cdots,t_n)$ and 
		$L^{(i_1,\cdots,i_k)}_{J}$ is the half line through the origin with argument ${\left({\displaystyle2\pi\sum^{n}_{\substack{j=1\\
						j\notin\{i_1,\cdots,i_{k}\}	\\
				}}
				\frac{s_j m_{n+j}}{m_j}}\right)}$
		for $J=(s_1,\cdots,s_n).$
		
		\noindent Again, we shrink $\eps'$ further so that $$\eps<\frac{\tan\left(\frac{\tht}{2}\right)}{1+\tan\left(\frac{\tht}{2}\right)}.$$ 
		This implies $$\left(\frac{\eps}{1-\eps}\right)<\tan\left(\frac{\tht}{2}\right).$$
		The expression (\ref{E:image of X0}) says that $	p_{k}(X^{(i_1,\cdots,i_k)}_{0})$ lies in the closed angular sector $\omega^{(i_1,\cdots,i_k)}_{0}$ with vertex at $0,$ positive real axis as the angular bisector and has an vertex-angle $2\phi,$ where 
		
		\begin{align*}
			\tan(\phi)=\left(\frac{\eps}{1-\eps}\right)<\tan\left(\frac{\tht}{2}\right)
		\end{align*}
		i.e $\omega^{(i_1,\cdots,i_k)}_{0}$  is an angular sector with vertex at the origin, positive real axis as the angular bisector and has vertex-angle $2\phi<\tht.$  
		
		\noindent Let 	$I=(t_{1},t_{2},\cdots ,t_{n})$ be an arbitrary element of $ \{0,1,\cdots,m_1-1\}\times \{0,1,\cdots,m_2-1\}\times \cdots\times \{0,1,\cdots,m_n-1\}.$
		Since $(z,w)\in X_I,$ we get that
		\begin{align*}
			p_{k}(z,w)
			&=	e^{i\alpha}
			\prod^{n}_{\substack{j=1\\
					j\notin\{i_1,\cdots,i_{k}\}	\\
			}}
			\bigg(	|z_{j}|^{2m_{n+j}}+	z_{j}^{m_{n+j}}R_{j} \bigg), \text{ where } \alpha={\bigg({\displaystyle2\pi 	\sum^{n}_{\substack{j=1\\
							j\notin\{i_1,\cdots,i_{k}\}	\\
					}}
					\frac{t_jm_{n+j}}{m_j}}\bigg)}.
		\end{align*}

		\noindent There exist a closed sector $\omega^{(i_1,\cdots,i_k)}_{I}$ with  vertex at the origin, $L^{(i_1,\cdots,i_k)}_{I}$  as the angular bisector with argument ${\bigg({\displaystyle2\pi\sum^{n}_{\substack{j=1\\
						j\notin\{i_1,\cdots,i_{k}\}	\\
				}}
				\frac{t_jm_{n+j}}{m_j}}\bigg)}$ and has vertex-angle $<\tht.$ Clearly, $L^{(i_1,\cdots,i_k)}_{I}$ is obtained by rotation of the positive real axis at an angle  ${\bigg({\displaystyle2\pi\sum^{n}_{\substack{j=1\\
						j\notin\{i_1,\cdots,i_{k}\}	\\
				}}
				\frac{t_jm_{n+j}}{m_j}}\bigg)}.$ By putting $\alpha_{j}=m_{n+j}~~~ \forall {j}\in \{1,\cdots,n\}\text{ and } \alpha_{j}=0~~\forall {j}\in\{n+1,\cdots,2n\}$ in \Cref{L:Argumnt}, we get that
		\begin{align}\label{E:Omega_IJ}
			\omega^{(i_1,\cdots,i_k)}_{I}\cap\omega^{(i_1,\cdots,i_k)}_{J}=\{0\}~~\text{ for distinct } X^{(i_1,\cdots,i_k)}_{I} \text{ and } X^{(i_1,\cdots,i_k)}_{J}.
		\end{align}
		For $\{i_1,\cdots,i_k\}=\emptyset,$ then we denote $\omega^{(i_1,\cdots,i_k)}_{I}$ by $\omega_{I}.$	
		
	\end{proof}

	\begin{lemma}\label{L:KLp2}
		Let $p_{k}$ and $X^{(i_1,\cdots,i_k)}_{I}$ be as above. Then
		\begin{align*}
			p_{k}^{-1}\{0\}\cap X^{(i_1,\cdots,i_k)}_{I}=\bigg\{\bigg(e^{\frac{2i\pi t_{1}}{m_1} }z_{1},\cdots,& e^{\frac{2i\pi t_{n}}{m_n}}z_{n}, \bar{z_1}^{m_{n+1}} + R_{1}(z),\cdots , \bar{z_{n}}^{m_{2n}} + R_{n}(z)\bigg):\\ &z\in \overline{D(\delta')}, z_{i_l}=0, l=1,\cdots,k, \prod^{n}_{\substack{j=1\\
					j\notin\{i_1,\cdots,i_{k}\}	\\
			}}
			z_{j} =0\bigg\}.
		\end{align*}
	\end{lemma}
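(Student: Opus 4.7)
The strategy is a direct computation combined with the hypothesis (ii) on the growth of each $R_j$. The plan is as follows.

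First, I would substitute a generic point of $X^{(i_1,\cdots,i_k)}_{I}$ into $p_k$. Writing such a point as $\left(e^{2\pi i t_1/m_1}z_1,\ldots,e^{2\pi i t_n/m_n}z_n,\bar z_1^{m_{n+1}}+R_1(z),\ldots,\bar z_n^{m_{2n}}+R_n(z)\right)$ with $z_{i_l}=0$ for $l=1,\dots,k$, an immediate computation (already carried out in the proof of \Cref{L:KLp1}) yields
\begin{align*}
p_k = e^{i\alpha}\prod^{n}_{\substack{j=1\\ j\notin\{i_1,\cdots,i_k\}}} z_j^{m_{n+j}}\bigl(\bar z_j^{m_{n+j}}+R_j(z)\bigr)
= e^{i\alpha}\prod^{n}_{\substack{j=1\\ j\notin\{i_1,\cdots,i_k\}}}\bigl(|z_j|^{2m_{n+j}}+z_j^{m_{n+j}}R_j(z)\bigr),
\end{align*}
where $\alpha$ is the phase appearing in \Cref{L:KLp1}. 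The containment $\supseteq$ is then immediate: whenever some $z_{j_0}=0$ with $j_0\notin\{i_1,\ldots,i_k\}$, the $j_0$-th factor vanishes, forcing $p_k=0$.

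For the reverse containment, assume $p_k$ vanishes on a point of $X^{(i_1,\cdots,i_k)}_{I}$. Since $e^{i\alpha}\neq0$, some factor $|z_{j_0}|^{2m_{n+j_0}}+z_{j_0}^{m_{n+j_0}}R_{j_0}(z)$ must be zero with $j_0\notin\{i_1,\ldots,i_k\}$. I would then argue by contradiction: if $z_{j_0}\neq 0$, dividing through by $z_{j_0}^{m_{n+j_0}}$ gives $\bar z_{j_0}^{m_{n+j_0}}+R_{j_0}(z)=0$, hence $|z_{j_0}|^{m_{n+j_0}}=|R_{j_0}(z)|$. The main (minor) obstacle is to invoke hypothesis (ii) correctly: since $R_{j_0}(z)\sim o(|z_{j_0}|^{m_{n+j_0}})$ as $z_{j_0}\to 0$, I can fix some $\eps'<1$ and shrink $\delta'>0$ (using the same $\delta'$ as in the proof of \Cref{L:KLp1}, further shrunk if necessary) so that $|R_{j_0}(z)|\le \eps'|z_{j_0}|^{m_{n+j_0}}<|z_{j_0}|^{m_{n+j_0}}$ on $\overline{D(\delta')}$ whenever $z_{j_0}\neq 0$. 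This contradicts $|z_{j_0}|^{m_{n+j_0}}=|R_{j_0}(z)|$, forcing $z_{j_0}=0$ and hence $\prod_{j\notin\{i_1,\ldots,i_k\}} z_j=0$.

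Combining both directions characterizes $p_k^{-1}\{0\}\cap X^{(i_1,\cdots,i_k)}_{I}$ exactly as the set described in the statement. The only subtlety is that the restriction to $\overline{D(\delta')}$ must be compatible with the $\delta'$ already chosen in \Cref{L:KLp1}; this is harmless since shrinking $\delta'$ only strengthens the earlier estimate on the image of $p_k$ lying in the sector $\omega^{(i_1,\cdots,i_k)}_{I}$.
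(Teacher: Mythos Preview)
Your proposal is correct and follows essentially the same approach as the paper: substitute a generic point of $X^{(i_1,\cdots,i_k)}_{I}$ into $p_k$, and use hypothesis~(ii) in the form $|R_j(z)|\le \eps'|z_j|^{m_{n+j}}$ on $\overline{D(\delta')}$ with $\eps'<1$ to rule out the vanishing of any factor $\bar z_j^{m_{n+j}}+R_j(z)$ when $z_j\ne0$. Your write-up is in fact a little more careful than the paper's, since you treat both inclusions explicitly and isolate the contradiction on a single factor, whereas the paper asserts directly that $\prod_{j\notin\{i_1,\ldots,i_k\}}(\bar z_j^{m_{n+j}}+R_j)\ne0$ and only writes out the $\subseteq$ direction.
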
	
	
	\begin{proof}
		\noindent Take $w=(w_1,\cdots,w_n,w_{n+1},\cdots,w_{2n})\in p_k^{-1}\{0\}\cap X^{(i_1,\cdots,i_k)}_{I}.$ Then 
		\begin{align*}
			w_{j}&=e^{\frac{2i\pi t_{j}}{m_j} }z_{j}~~ \text{ and }\\
			w_{n+j}&=\bar{z_j}^{m_{n+j}} + R_{j}(z)~~\text{ for }  j=1,2,\cdots,n;
		\end{align*}
		Since $w\in p_k^{-1}\{0\},$ 
		\begin{align*}
			\prod^{n}_{\substack{j=1\\
					j\notin\{i_1,\cdots,i_{k}\}	\\
			}}
			e^{\frac{2\pi i t_{j}m_{n+j}}{m_{j}}}	z_{j}^{m_{n+j}} \bigg(\bar{z_{j}}^{m_{n+j}} + R_{j}\bigg)=0.
		\end{align*}
		This implies 
		\begin{align}\label{E:Img_XI=0}
			e^{i\alpha}
			\bigg(	\prod^{n}_{\substack{j=1\\
					j\notin\{i_1,\cdots,i_{k}\}	\\
			}}
			z_{j}^{m_{n+j}} \bigg)
			\prod^{n}_{\substack{j=1\\
					j\notin\{i_1,\cdots,i_{k}\}	\\
			}}
			\bigg(\bar{z_{j}}^{m_{n+j}} + R_{j}\bigg)=0.
		\end{align}

		\noindent On $\overline{D(\delta')},$ $|R_j|\le \eps'|z_{j}|^{m_{n+j}},$ for some $\eps'<1,$ Therefore, we can say that 
		\begin{align*}
			\prod^{n}_{\substack{j=1\\
					j\notin\{i_1,\cdots,i_{k}\}	\\
			}}
			\bigg(\bar{z_{j}}^{m_{n+j}} + R_{j}\bigg)\ne 0.
		\end{align*}
		Hence, form (\ref{E:Img_XI=0}), we have
		\begin{align*}
			\prod^{n}_{\substack{j=1\\
					j\notin\{i_1,\cdots,i_{k}\}	\\
			}}
			z_{j}^{m_{n+j}} =0 ~~\text{ i.e } 	\prod^{n}_{\substack{j=1\\
					j\notin\{i_1,\cdots,i_{k}\}	\\
			}}
			z_{j} =0.
		\end{align*}
	\end{proof}		
	
	\begin{lemma}\label{L:k-th term}
		Assume that $\{l_1,\cdots,l_{k}\}\subset\{1,2,\cdots,n\}$ and $\cup_{I}X_{I}^{(l_1,\cdots,l_{k},j)}$ is polynomially convex for each $j\in\{1,\cdots,n\}\setminus\{l_1,\cdots,l_{k}\}.$
		Then 
		\begin{align*}
			\cup_{j\in \{1,\cdots,n\}\setminus\{l_1,\cdots,l_{k}\}}	\bigg(\cup_{I}X_{I}^{(l_1,\cdots,l_{k},j)}\bigg)
		\end{align*}	
		is polynomially convex.	
	\end{lemma}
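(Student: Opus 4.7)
The plan is to imitate the two-level induction in the proof of \Cref{L:Mk-th term}, replacing the polynomial $\prod_{j\notin\{l_1,\ldots,l_k\}}^{t} z_j w_j$ used there by the modified polynomial $p_t(z,w) = \prod_{j\notin\{l_1,\ldots,l_k\}}^{t} z_j^{m_{n+j}} w_j$, and invoking \Cref{L:KLp1} and \Cref{L:KLp2} in place of \Cref{L:KLp1M}. Assuming without loss of generality that $l_1 < l_2 < \cdots < l_k$, I would decompose
\[
\bigcup_{j \in \{1,\ldots,n\}\setminus\{l_1,\ldots,l_k\}} \Big(\bigcup_I X_I^{(l_1,\ldots,l_k,j)}\Big) \;=\; \bigcup_{r=1}^{k+1} A_{l_r},
\]
where each $A_{l_r}$ is the sub-union taken over the gap $\{l_{r-1}+1,\ldots,l_r-1\}$ (with the conventions $l_0 := 0$, $l_{k+1} := n+1$), exactly as in \Cref{L:Mk-th term}.

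For the first (inner) induction, I would fix $r$ and show that $A_{l_r}$ is polynomially convex by adjoining one piece $\bigcup_I X_I^{(l_1,\ldots,l_k,t+1)}$ at a time. Supposing $K_1 := \bigcup_{j=l_{r-1}+1}^{t}\bigl(\bigcup_I X_I^{(l_1,\ldots,l_k,j)}\bigr)$ is polynomially convex and setting $K_2 := \bigcup_I X_I^{(l_1,\ldots,l_k,t+1)}$, I would apply Kallin's lemma (\Cref{R:Sto_apprx.}) with the polynomial $p_t$ above. Three items need checking: (i) $p_t(K_1) = \{0\}$, because on each summand $X_I^{(l_1,\ldots,l_k,j)}$ with $j \le t$, the coordinate $z_j = 0$ kills the factor $z_j^{m_{n+j}}$; (ii) $\widehat{p_t(K_1)} \cap \widehat{p_t(K_2)} \subset \{0\}$, which is automatic since $\widehat{p_t(K_1)} = \{0\}$; and (iii) $p_t^{-1}\{0\} \cap (K_1 \cup K_2) = K_1$, which reduces (by induction hypothesis) to showing $p_t^{-1}\{0\} \cap K_2 \subset K_1$. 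This last inclusion is the only place where the perturbation $R_j$ enters: for $\zeta \in K_2$ with $p_t(\zeta) = 0$, the estimate $|R_j| \le \eps'|z_j|^{m_{n+j}}$ with $\eps' < 1$ on $\overline{D(\delta')}$ (established in the proof of \Cref{L:KLp1}) ensures $\bar{z_j}^{m_{n+j}} + R_j(z) = 0$ implies $z_j = 0$, so vanishing of $p_t$ forces $z_{j_0} = 0$ for some $j_0 \in \{l_{r-1}+1,\ldots,t\}$, whence $\zeta \in X_I^{(l_1,\ldots,l_k,t+1,j_0)} \subset K_1$.

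The outer induction, assembling $\bigcup_{r=1}^{k+1} A_{l_r}$, proceeds in an entirely parallel manner: if $E_1 := \bigcup_{r=1}^{s} A_{l_r}$ has already been shown to be polynomially convex, I would apply Kallin's lemma to $E_1$ and $E_2 := A_{l_{s+1}}$ with polynomial $\prod_{j=1,\; j\notin\{l_1,\ldots,l_k\}}^{l_s} z_j^{m_{n+j}} w_j$, verifying the same three Kallin conditions by the identical mechanism: $E_1$ lies in $p^{-1}\{0\}$ by construction (every $X_I^{(l_1,\ldots,l_k,j)}$ making up $E_1$ has $j \le l_s$ with $z_j = 0$), the polynomial-hull disjointness is automatic, and the zero-fibre intersection inside $E_2$ is forced into $E_1$ by the same smallness estimate on $R_j$.

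I expect the only genuinely technical point---effectively the main obstacle---to be check (iii) above, i.e.~verifying $p_t^{-1}\{0\} \cap K_2 \subset K_1$. This is precisely the analogue of \Cref{L:KLp2} for the sub-product $p_t$ instead of the fully-indexed polynomial, and its proof carries over verbatim provided $\delta$ is chosen small enough that $|R_j(z)| \le \eps'|z_j|^{m_{n+j}}$ with $\eps' < 1$ on $\overline{D(\delta)}$; this smallness is exactly the hypothesis $R_j(z) \sim o(|z_j|^{m_{n+j}})$ of \Cref{T:Main Result1}.
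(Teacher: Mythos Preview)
Your proposal follows essentially the same two-level induction as the paper's own proof, with the same modified polynomials $p_t(z,w)=\prod z_j^{m_{n+j}}w_j$ and the same appeals to \Cref{L:KLp2} and Kallin's lemma (\Cref{R:Sto_apprx.}). One small correction: since you run the inner induction for a general block $A_{l_r}$ (the paper restricts WLOG to $A_{l_1}$), the product defining your $p_t$ should range over $j\in\{l_{r-1}+1,\ldots,t\}$ rather than over all of $\{1,\ldots,t\}\setminus\{l_1,\ldots,l_k\}$; otherwise vanishing of $p_t$ on $K_2$ could force $z_{j_0}=0$ for some $j_0$ lying in an \emph{earlier} gap, and then $\zeta\in X_I^{(l_1,\ldots,l_k,t+1,j_0)}$ need not lie in your $K_1\subset A_{l_r}$, so the inclusion $p_t^{-1}\{0\}\cap K_2\subset K_1$ in your check~(iii) would fail as written.
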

	
	\begin{proof}
		Without loss of generality, we assume that $1\le l_1<l_2\cdots<l_k\le n.$ Clearly, 
		\begin{align*}
			\cup_{j\in \{1,\cdots,n\}\setminus\{l_1,\cdots,l_{k}\}}	\bigg(\cup_{I}X_{I}^{(l_1,\cdots,l_{k},j)}\bigg)=\cup_{j=1}^{k+1}A_{l_j},
		\end{align*}
		where 
		\begin{align*}
			A_{l_{1}}:&=\cup_{j=1}^{l_1-1}	\bigg(\cup_{I}X_{I}^{(l_1,\cdots,l_{k},j)}\bigg);\\
			A_{l_{r+1}}:&=	\cup_{j=l_{r}+1}^{l_{r+1}-1}	\bigg(\cup_{I}X_{I}^{(l_1,\cdots,l_{k},j)}\bigg)~\text{ for }r=1,\cdots,k-1;\text{ and }\\
			A_{l_{k+1}}:&=	\cup_{j=l_{k+1}}^{n}	\bigg(\cup_{I}X_{I}^{(l_1,\cdots,l_{k},j)}\bigg).
		\end{align*}
		First, we prove that each $A_{l_j}$ is polynomially convex. Without loss of generality, it is enough to show that 
		\begin{align*}
			A_{l_{1}}:&=	\cup_{j=1}^{l_1-1}	\bigg(\cup_{I}X_{I}^{(l_1,\cdots,l_{k},j)}\bigg)
		\end{align*}
		is polynomially convex. We will now apply induction principle to show $A_{l_1}$ is polynomially convex. Assume that $K_{1}:=\cup_{j=1}^{t}	\bigg(\cup_{I}X_{I}^{(l_1,\cdots,l_{k},j)}\bigg)$ is polynomially convex for $t<l_1-1$. 
		We need to show that 
		\begin{align*}
			\cup_{j=1}^{t+1}\bigg(\cup_{I}X_{I}^{(l_1,\cdots,l_{k},j)}\bigg)=K_{1}\cup X_{I}^{(l_1,\cdots,l_{k},t+1)}=:K_{1}\cup K_{2}
		\end{align*}
		is polynomially convex.	For this, we consider the polynomial
		\begin{align*}
			p_{t}(z,w)=\prod^{t}_{\substack{j=1\\
					j\notin\{l_1,\cdots,l_{k}\}	\\
			}}
			z_{j}^{m_{n+j}}w_{j}.
		\end{align*}
		\noindent Then we have the following:
		\begin{itemize}
			\item $p_{t}(K_{1})=\{0\}.$ 
			\item $p_{t}(K_{2}\setminus K_{1})\not 
			=\{0\}:$ using \Cref{L:KLp2}, we get that for $(z,w)\in K_{2}\setminus K_{1},$ $p_{t}(z,w)=0$ implies
			$\displaystyle \prod^{t}_{\substack{j=1\\
					j\notin\{l_1,\cdots,l_{k}\}	\\
			}}
			z_{j}=0.$ Since $(z,w) \notin K_{1},$ this is not possible. Therefore, $p_{t}(K_{2}\setminus K_{1})\not 
			=\{0\}$ and  $p_{t}(K_{1})\cap p_{t}(K_{2})=\{0\}.$
			\item Applying \Cref{L:KLp2}, we get that $p_{t}^{-1}\{0\}\cap K_{1} =K_{1}$ and $p_{t}^{-1}\{0\}\cap K_{2}\subset K_{1}.$ Therefore, $p_{t}^{-1}\{0\}\cap \bigg(K_1\cup K_{2}\bigg)=K_{1}$ is polynomially convex.
		\end{itemize}
		
		\noindent Therefore, using Kallin's lemma we get that 
		$K_{1}\cup K_{2}=		\cup_{j=1}^{t+1}\bigg(\cup_{I}X_{I}^{(l_1,\cdots,l_{k},j)}\bigg)$ is polynomially convex, and hence, by induction principle $A_{l_1}$ is polynomially convex.
		
		Again, we will apply induction principle to show $\cup_{j=1}^{k+1}A_{l_j}$ is polynomially convex. Assume that $E_{1}:=\cup_{j=1}^{s}A_{l_j}$ is polynomially convex for $s<k+1$. We write $\cup_{j=1}^{s+1}A_{l_j}=E_{1}\cup
		A_{l_{s+1}}=:E_{1}\cup E_{2}.$
		
		\noindent We consider the polynomial
		\begin{align*}
			p_{s}(z,w)=	\prod^{s}_{\substack{j=1\\
					j\notin\{l_1,\cdots,l_{k}\}	\\
			}}
			z_{j}^{m_{n+j}}w_{j}.
		\end{align*}
		Then we have the following:
		\begin{itemize}
			\item $p_{s}(E_{1})=\{0\}.$
			\item $p_{s}(E_{2}\setminus E_{1})\not 
			=\{0\}:$ using \Cref{L:KLp2}, we get that for $(z,w)\in E_{2}\setminus E_{1},$ $p_{s}(z,w)=0$ implies
			$\displaystyle \prod^{s}_{\substack{j=1\\
					j\notin\{l_1,\cdots,l_{k}\}	\\
			}}
			z_{j}=0.$ Since $(z,w) \notin E_{1},$ this is not possible. Therefore, $p_{s}(E_{2}\setminus E_{1})\not 
			=\{0\}$ and  $p_{s}(E_{1})\cap p_{s}(E_{2})=\{0\}.$
			
			\item Applying \Cref{L:KLp2}, we get that $p_{s}^{-1}\{0\}\cap E_{1}=E_{1}$ and $p_{s}^{-1}\{0\}\cap E_{2}\subset E_{1}.$ Therefore, $p_{s}^{-1}\{0\}\cap (E_{1}\cup E_{2})=E_{1}$ is polynomially convex.
		\end{itemize}	
		\noindent Again, by Kallin's lemma, we conclude that $\cup_{j=1}^{s+1}A_{l_j}$ is polynomially convex. Therefore, by induction principle, we obtained that $\cup_{j=1}^{k+1}A_{l_j}$ is polynomially convex, that is, $\cup_{j\notin\{l_1,\cdots,l_{k}\}}	\left(\cup_{I}X_{I}^{(l_1,\cdots,l_{k},j)}\right)$ is polynomially convex.	
	\end{proof}

	\begin{proposition}\label{P:Gnrl_stp}
		Assume that $\{l_1,\cdots,l_{k}\}\subset\{1,\cdots,n\}$ and $\cup_{I}X_{I}^{(l_1,\cdots,l_{k},j)}$ is polynomially convex for each $j\in\{1,\cdots,n\}\setminus\{l_1,\cdots,l_{k}\}.$ Then $\cup_{I}X_{I}^{(l_1,\cdots,l_{k})}$ is polynomially convex.
	\end{proposition}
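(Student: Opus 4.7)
The plan is to mirror the strategy already used for Lemma~\ref{L:Gnrl_stpM} in the Minsker setting, now in the presence of the perturbation $R$. Writing $L:=\{l_1,\ldots,l_k\}$ for brevity, I would apply Kallin's lemma (Result~\ref{R:Sto_apprx.}) to the finite family $\{X_I^{L}\}_I$ using the polynomial
$$p(z,w) \;=\; \prod_{\substack{j=1 \\ j\notin L}}^{n} z_{j}^{m_{n+j}}\,w_{j},$$
which is precisely the polynomial $p_k$ of the preceding lemmas specialised to the index set $L$.

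The four ingredients I need are: \emph{(a)} each individual $X_I^{L}$ is polynomially convex; \emph{(b)} $p(X_I^{L})\subset\omega_I^{L}$ with $\omega_I^{L}\cap\omega_J^{L}=\{0\}$ for $I\neq J$; \emph{(c)} the identification $p^{-1}(0)\cap(\cup_I X_I^{L})=\cup_{j\notin L}\cup_I X_I^{L,j}$; and \emph{(d)} polynomial convexity of the set appearing in (c). For \emph{(a)}, after the unitary twist $z_j\mapsto e^{2\pi it_j/m_j}z_j$ the set $X_I^{L}$ is the graph of $(\bar z^{m_{n+1}}_1+R_1,\ldots,\bar z^{m_{2n}}_n+R_n)$ over the lower-dimensional closed polydisc $\{z\in\overline{D(\delta')}:z_{l_1}=\cdots=z_{l_k}=0\}$, and condition~(i) of Theorem~\ref{T:Main Result1} restricts to this slice, so Lemma~\ref{L:main lemma} applies. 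Facts \emph{(b)} and \emph{(c)} are Lemma~\ref{L:KLp1} and Lemma~\ref{L:KLp2} respectively, while \emph{(d)} is exactly Lemma~\ref{L:k-th term} fed by the standing hypothesis of the proposition.

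Finally I would assemble these via Kallin's lemma, iterating over the indices $I$. Enumerate them $I_1,\ldots,I_M$ and set $Y_s:=\bigcup_{r\le s}X_{I_r}^{L}$; then induct on $s$. The image $p(Y_s)$ lies in the star-shaped union $\bigcup_{r\le s}\omega_{I_r}^{L}$, whose polynomial hull in $\cplx$ meets $\omega_{I_{s+1}}^{L}$ only at $0$ by~\emph{(b)}, and \emph{(a)} supplies polynomial convexity of each new piece. The real obstacle is that Kallin's lemma demands polynomial convexity of each intermediate zero-fibre $p^{-1}(0)\cap Y_{s+1}$, not merely its containment in the polynomially convex set of~\emph{(d)}. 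I would overcome this by choosing the enumeration so that each partial zero-fibre is itself of the form $\bigcup_{j\notin L}\bigcup_{I\in S}X_I^{L,j}$ for a growing subfamily $S$, and rerunning the two nested Kallin inductions already deployed in the proofs of Lemmas~\ref{L:Mk-th term} and \ref{L:k-th term} to establish polynomial convexity at every intermediate stage. This closes the outer induction and yields polynomial convexity of $\bigcup_I X_I^{L}$.
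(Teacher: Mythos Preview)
Your argument is the paper's argument: the same polynomial $p$, the same appeals to Lemmas~\ref{L:KLp1}, \ref{L:KLp2}, \ref{L:k-th term}, and Kallin's lemma to finish. You are in fact more explicit than the paper on two points it leaves implicit---that each individual $X_I^{L}$ is polynomially convex (your ingredient~(a)), and how the Kallin iteration over the indices $I$ actually runs. On the latter, the intermediate-zero-fibre obstacle you flag has a simpler cure than re-running the nested inductions of Lemmas~\ref{L:Mk-th term}--\ref{L:k-th term} for subfamilies: start the iteration not with a single $X_{I_1}^{L}$ but with $Y_0:=p^{-1}(0)\cap\bigl(\cup_I X_I^{L}\bigr)$, which is polynomially convex by your~(d); since $p(Y_0)=\{0\}$ and $p^{-1}(0)\cap X_I^{L}\subset Y_0$ for every $I$, the zero-fibre at each subsequent step of the induction is exactly $Y_0$, and the obstacle disappears.
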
		
	\begin{proof}
		Since $\cup_{I}X_{I}^{(l_1,\cdots,l_{k},j)}$ is polynomially convex, by \Cref{L:k-th term}, $\cup_{j\notin\{l_1,\cdots,l_{k}\}}^{n}	\bigg(\cup_{I}X_{I}^{(l_1,\cdots,l_{k},j)}\bigg)$ is polynomially convex.
		\noindent We consider the polynomial
		\begin{align*}
			p(z,w)=	\prod^{n}_{\substack{j=1\\
					j\notin\{l_1,\cdots,l_{k}\}	\\
			}}
			z_{j}^{m_{n+j}}w_{j}.
		\end{align*}
		
		\noindent Therefore
		\begin{itemize}
			\item by \Cref{L:KLp1}, $p\left(X_{I}^{(l_1,\cdots,l_{k})}\right)\subset\omega_{I}^{(l_1,\cdots,l_k)}~~~\forall I\in \{0,1,\cdots,m_1-1\}\times \{0,1,\cdots,m_2-1\}\times \cdots\times\{0,1,\cdots,m_n-1\};$
			\item Since $\omega_{I}^{(l_1,\cdots,l_k)}\cap\omega_{J}^{(l_1,\cdots,l_k)}=\{0\},$ we have $p\left(X_{I}^{(l_1,\cdots,l_{k})}\right)\cap p\left(X_{J}^{(l_1,\cdots,l_{k})}\right)\subset \{0\};$
			\item by \Cref{L:KLp2}, $p^{-1}\{0\}\cap\left(\cup_{I}X_{I}^{(l_1,\cdots,l_{k})}\right)=	\bigcup_{j\notin\{l_1,\cdots,l_{k}\}}^{n}	\left(\cup_{I}X_{I}^{(l_1,\cdots,l_{k},j)}\right),$ which is polynomially convex by \Cref{L:k-th term}.
		\end{itemize}
		Therefore, by Kallin's lemma, we infer that $\cup_{I}X_{I}^{(l_1,\cdots,l_{k})}$ is polynomially convex.		
	\end{proof}
	\smallskip 		
	\begin{proof}[Proof of \Cref{T:Main Result1}] Take $\delta:=\min\{\delta',\delta_{0}\}.$ We divide the proof of this theorem in three steps.\\
		
		\noindent  {\bf Step I:} {\bfseries\boldmath Showing	$\poly(X_{I})=\smoo(X_{I})~~~\forall_{I}\in \{0,1,\cdots,m_1-1\}\times \{0,1,\cdots,m_2-1\}\times \cdots\times \{0,1,\cdots,m_n-1\}.$}
		\par We have
		\begin{align*}
			X_{0}&=\{(z_{1},\cdots, z_{n}, \bar{z_1}^{m_{n+1}} + R_{1}(z),\cdots , \bar{z_{n}}^{m_{2n}} + R_{n}(z)): z\in \overline{D(\delta)} \}\\
			&=Gr_{\overline{D(\delta)}}(\bar{z_1}^{m_{n+1}} + R_{1}(z),\cdots , \bar{z_{n}}^{m_{2n}} + R_{n}(z)).
		\end{align*}
		
		\sloppy For each $I=(t_1,\cdots,t_{n}),$ we define a map $\Psi_{I}:\mathbb{C}^{2n}\to \mathbb{C}^{2n}$ by
		\begin{align*}
			\Psi_{I}(z_{1},\cdots,z_{n},w_{1},w_{2},\cdots,w_{n})=\left(e^{\frac{2i\pi t_{1}}{m_1} }z_{1},e^{\frac{2i\pi t_{2}}{m_2} }z_{2}\cdots, e^{\frac{2i\pi t_{n}}{m_n}}z_{n}, w_{1}, w_{2}\cdots, w_{n}\right).
		\end{align*}
		Therefore, $\Psi_{I}(X_{0})=X_{I}.$ \Cref{T:Main Result2} gives us that $\poly(X_{0})=\smoo(X_{0}).$ Since $\Psi_{I}$ is a biholomorphism, using \Cref{R:Sto_propr}, we conclude that $\poly(X_{I})=\smoo(X_{I}).$\\
		
		\medskip	
		\noindent  {\bf Step II:} {\bfseries\boldmath Showing $\poly(\Phi^{-1}(X))= \smoo(\Phi^{-1}(X)).$}\\
		\par We consider the polynomial
		\begin{align*}
			p(z,w)=	\prod^{n}_{\substack{j=1
			}}
			z_{j}^{m_{n+j}}w_{j}.
		\end{align*}
		
		\smallskip
		\noindent For $\{i_1,\cdots,i_k\}=\emptyset,$ in view of \Cref{L:KLp1}, we get that
		\begin{itemize}
			\item For each $I=(t_1,\cdots,t_{n})\in \{0,1,\cdots,m_1-1\}\times \cdots \times\{0,1,\cdots,m_n-1\},$ $p(X_{I})\subset\omega_{I},$ where $\omega_{I}$ is a closed sector in the complex plane with vertex at the origin and $L_{I}$  as the angular bisector with argument  argument ${\bigg({\displaystyle2\pi\sum^{n}_{\substack{j=1
					}}
					\frac{t_jm_{n+j}}{m_j}}\bigg)};$
			
			\item $\omega_{I}\cap\omega_{J}=\{0\}~\text{ for all } {I\ne J}.$
		\end{itemize}
		
		\noindent To apply Kallin's lemma, we need to show that $p^{-1}\{0\}\cap\bigg(\cup_{I}X_{I}\bigg)=\cup^{n}_{j=1}\left(\cup_{I}X_{I}^{(j)}\right)$ is polynomially convex. First we focus on the the polynomial convexity of $\cup_{I}X_{I}^{(j)} \text{ for } j\in{1,\cdots,n}.$	\Cref{P:Gnrl_stp} says that for each $i_{1}\in \{1,\cdots,n\},$ $\cup_{I}X_{I}^{(i_1)}$ is polynomially convex if for each $i_2\in \{1,\cdots,n\}\setminus\{i_1\},$ $\cup_{I}X_{I}^{(i_1,i_2)}$ is polynomially convex. Again by \Cref{P:Gnrl_stp}, $\cup_{I}X_{I}^{(i_1,i_2)}$ is polynomially convex if for each $i_3\in \{1,\cdots,n\}\setminus\{i_1,i_2\},$ $\cup_{I}X_{I}^{(i_1,i_2,i_3)}$ is polynomially convex. Proceeding in this way, we arrive at this situation that $\cup_{I}X_{I}^{(i_1,i_2,\cdots,i_{n-1})}$ is polynomially convex if for each $i_{n}\in\{1,\cdots,n\}\setminus\{i_1,i_2,\cdots,i_{n-1}\},$ $\cup_{I}X_{I}^{(i_1,i_2,\cdots,i_{n-1},i_n)}$ is polynomially convex. But $\cup_{I}X_{I}^{(i_1,i_2,\cdots,i_{n-1},i_n)}=\{0\}$ for all $I,$ which is obviously polynomially convex. Hence, for each $j\in \{1,\cdots,n\},$ $\cup_{I}X_{I}^{(j)}$ is polynomially convex. We will now apply induction on $n,$ to conclude that $p^{-1}\{0\}\cap\left(\cup_{I}X_{I}\right)$ is polynomially convex. Assume that $K_{1}:=\cup^{k}_{j=1}\left(\cup_{I}X_{I}^{(j)}\right)$ is polynomially convex. We need to show that $\cup^{k+1}_{j=1}\left(\cup_{I}X_{I}^{(j)}\right)=K_{1}\cup \left(\cup_{I}X_{I}^{(k+1)}\right)=:K_{1}\cup K_{2}$ is polynomially convex. For this we again apply Kallin's lemma. We consider the polynomial
		\begin{align*}
			p_1(z,w)=\prod^{k}_{\substack{j=1
			}}
			z_{j}^{m_{n+j}}w_{j}.
		\end{align*}
		Then we have the following:
		\begin{itemize}
			\item $p_1(K_{1})=\{0\}.$ 
			\item $p_1(K_{2}\setminus K_{1})\not 
			=\{0\}:$ using \Cref{L:KLp2}, we get that for $(z,w)\in K_{2}\setminus K_{1},$ $p_1(z,w)=0$ implies
			$\displaystyle \prod^{k}_{\substack{j=1\\
			}}
			z_{j}=0.$ Since $(z,w) \notin K_{1},$ this is not possible. Therefore, $p_1(K_{2}\setminus K_{1})\not 
			=\{0\}$ and  $p_1(K_{1})\cap p_1(K_{2})=\{0\}.$
			
			\item applying \Cref{L:KLp2}, we get that $p_1^{-1}\{0\}\cap K_{1}=K_{1}$  and $p_1^{-1}\{0\}\cap K_{2}\subset K_{1}.$ Hence $p_1^{-1}\{0\}\cap \left(K_{1}\cup K_{2}\right)=K_{1}$ is polynomially convex.
		\end{itemize}
		Therefore, by Kallin's lemma, $K_{1}\cup K_{2}$ is polynomially convex.	
		\noindent Again from  {\bf Step I}, we get that for each $I\in \left\{0,1,\cdots,m_1-1\right\}\times \cdots \times\left\{0,1,\cdots,m_n-1\right\},$ $\poly(X_{I})= \smoo(X_{I}).$
		
		\noindent The above informations allow us to apply \Cref{R:Sto_apprx.} to conclude that 
		\begin{displaymath}
			\poly\left(\cup_{I}X_I\right)= \smoo\left(\cup_{I}X_I\right),~ \text{ that is } \poly\left(\Phi^{-1}(X)\right)=\smoo\left(\Phi^{-1}(X)\right).
		\end{displaymath}
		\noindent Therefore, by \Cref{R:Sto_propr}, we get that
		\begin{displaymath}
			\poly(X)= \smoo(X).
		\end{displaymath}

		\noindent  {\bf Step III:} {\bfseries\boldmath Showing that ${z^{m_{1}}_{1},\cdots, z^{m_{n}}_{n}, \bar{z_1}^{m_{n+1}} + R_{1}(z),\cdots , \bar{z_{n}}^{m_{2n}} + R_{n}(z)}$ separates points on $\mathbf{\overline{D(\delta)}}.$}\\ 
		\par Let $a=(a_{1},a_{2},\cdots, a_{n}), b=(b_{1},b_{2},\cdots,b_{n})\in \mathbf{\overline{D(\delta)}}$ with $a\ne b$. Then there exists a set $\{i_1,\cdots,i_k\} \subseteq \{1,\cdots,n\}$ such that $a_{j}\ne b_{j}~~\forall j\in \{i_1,\cdots,i_k\}$ and $a_{j}= b_{j}~~\forall j\notin \{i_1,\cdots,i_k\}.$ If $a^{m_l}_{l}\ne b^{m_l}_{l} \text{ for some } l\in \{i_1,\cdots,i_k\}$ then $z^{m_{l}}_{l}$ separates $a$ and $b.$ Next, we assume that $ a^{m_j}_{j}=b^{m_j}_{j} \text{ for all } j\in \{i_1,\cdots,i_k\}.$ We now show that, for some $j\in\{i_1,\cdots,i_k\},~~\bar{z_{j}}^{m_{n+j}}+R_{j}(z)$ separates $a$ and $b.$ If possible, assume that
		\begin{align*}
			\left(\bar{z_{j}}^{m_{n+j}}+R_{j}\right)(a)=(\bar{z_{j}}^{m_{n+j}}+R_{j})(b)~~~\forall {j} \in \{i_1,\cdots,i_k\}.
		\end{align*}
		This implies 
		\begin{align}\label{E:Pt_Separte1}
			\bar{a_{j}}^{m_{n+j}}-\bar{b_{j}}^{m_{n+j}}=R_{j}(b)-R_{j}(a)~~\forall {j} \in \{i_1,\cdots,i_k\}.
		\end{align}
		Now we compute:
		\begin{align*}
			\sqrt{\sum_{j\in\{i_1,\cdots,i_k\}} |R_{j}(b)-R_{j}(a)|^2}\le|R(b)-R(a)|&\le c \left(\sum_{j=1}^n \left|b_j^{m_{n+j}}-a_j^{m_{n+j}}\right|^2\right)^{\frac{1}{2}}\\
			&=c \left(\sum_{j\in\{i_1,\cdots,i_k\}} \left|b_j^{m_{n+j}}-a_j^{m_{n+j}}\right|^2\right)^{\frac{1}{2}}.
		\end{align*}
		\noindent Therefore, using (\ref{E:Pt_Separte1}), we obtain from above that
		\begin{align}\label{E:Pt_Separte2}
			\sqrt{\sum_{j\in\{i_1,\cdots,i_k\}} \left|b_j^{m_{n+j}}-a_j^{m_{n+j}}\right|^2} 
			&\le c \left(\sum_{j\in\{i_1,\cdots,i_k\}} \left|b_j^{m_{n+j}}-a_j^{m_{n+j}}\right|^2\right)^{\frac{1}{2}}.
		\end{align}
		Since $a_{j}\ne b_{j},~~ a^{m_j}_{j}=b^{m_j}_{j} \text{ for all } j\in \{i_1,\cdots,i_k\}$ and $\gcd(m_j,m_{n+j})=1,$ we can say that 
		\begin{align*}
			{\sum_{j\in\{i_1,\cdots,i_k\}} \left|b_j^{m_{n+j}}-a_j^{m_{n+j}}\right|^2}\ne 0.
		\end{align*}
		Therefore, from (\ref{E:Pt_Separte2}) we get that $c\ge 1.$ This is a contradiction to our hypothesis.
		Therefore, 
		\begin{align*}
			[z^{m_{1}}_{1},\cdots, z^{m_{n}}_{n}, \bar{z_1}^{m_{n+1}} + R_{1}(z),\cdots , \bar{z_{n}}^{m_{2n}} + R_{n}(z); \overline{D(\delta)}] = C(\overline{D(\delta)}).
		\end{align*}	
	\end{proof}

	\noindent {\bf Acknowledgements.}
	I would like to thank Sushil Gorai for the discussion during the course of this work. This work is supported by an INSPIRE Fellowship (IF 160487) funded by DST.


\end{document}